\newcommand\cL{\mathcal L}
\newcommand\cP{\mathcal P}
\newcommand\F{\mathbb{F}}
\newcommand\cU{\mathcal U}
\newcommand\cl{\mathord{\mkern1mu:\mkern2mu}}
\newtheorem{theorem}{Theorem}
\newtheorem{lemma}[theorem]{Lemma}
\newtheorem{corollary}[theorem]{Corollary}
\newtheorem{conjecture}[theorem]{Conjecture}
\newcommand\Lm[1]{Lemma~\ref{l:#1}}
\newcommand\Th[1]{Theorem~\ref{t:#1}}
\newenvironment{lst}{\begin{list}{$\bullet$}{\topsep=0pt\itemsep=0pt}}{\end{list}}
\newdimen\unit\newdimen\psep\newcount\nd\newcount\ndx\newbox\dotb\newbox\ptbox
\newdimen\dx\newdimen\dy\newdimen\dxx\newdimen\dyy\newdimen\hgt
\newdimen\xoff\newdimen\yoff
\newcommand\clap[1]{\hbox to 0pt{\hss{#1}\hss}}
\newcommand\vdisk[1]{{\font\dotf=cmr10 scaled #1\dotf.}}
\newcommand\varline[2]{\setbox\dotb\hbox{\vdisk{#1}}\xoff=-.5\wd\dotb
\wd\dotb=0pt\yoff=-.5\ht\dotb\psep=#2\ht\dotb}
\newcommand\varpt[1]{\setbox\ptbox\clap{\vdisk{#1}}\setbox\ptbox
\hbox{\raise-.5\ht\ptbox\box\ptbox}}
\newcommand\cpt{\copy\ptbox}
\newcommand\point[3]{\rlap{\kern#1\unit\raise#2\unit\hbox{#3}}}
\newcommand\setnd[4]{\dx=#3\unit\advance\dx-#1\unit\divide\dx by\psep
\dy=#4\unit\advance\dy-#2\unit\divide\dy by\psep
\multiply\dx by\dx\multiply\dy by\dy\advance\dx\dy\nd=1\advance\dx-1sp
\loop\ifnum\dx>0\advance\dx-\nd sp\advance\nd1\advance\dx-\nd sp\repeat}
\newcommand\dl[4]{{\setnd{#1}{#2}{#3}{#4}\dline{#1}{#2}{#3}{#4}\nd}}
\newcommand\dline[5]{{\nd=#5\hgt=#2\unit\dx=#3\unit\advance\dx-#1\unit
\divide\dx by\nd\dy=#4\unit\advance\dy-#2\unit\divide\dy by\nd
\advance\hgt\yoff\rlap{\kern#1\unit\kern\xoff\loop\ifnum\nd>1\advance\nd-1
\advance\hgt\dy\kern\dx\raise\hgt\copy\dotb\repeat}}}
\newcommand\qellip[4]{{\setnd{0}{0}{#3}{#4}\dx=\unit\dy=0pt\raise\yoff\rlap{%
\kern#1\unit\kern\xoff\raise#2\unit\hbox{\loop\ifnum\dx>0\rlap{\kern#3\dx
\raise#4\dy\copy\dotb}\hgt=\dx\divide\hgt by\nd\advance\dy\hgt\hgt=\dy
\divide\hgt by\nd\advance\dx-\hgt\repeat\rlap{\raise#4\dy\copy\dotb}}}}}
\newcommand\bez[6]{{\setnd{#1}{#2}{#3}{#4}\ndx=\nd\setnd{#3}{#4}{#5}{#6}
\ifnum\ndx>\nd\nd=\ndx\fi\dx=#3\unit\advance\dx-#1\unit\dy=#4\unit
\advance\dy-#2\unit\dxx=#5\unit\advance\dxx-#1\unit\dyy=#6\unit\advance
\dyy-#2\unit\advance\dxx-2\dx\advance\dyy-2\dy\divide\dxx by\nd\divide\dyy
by\nd\advance\dx.25\dxx\advance\dy.25\dyy\divide\dx by\nd\divide\dy by\nd
\multiply\nd by2\dx=100\dx\dy=100\dy\dxx=100\dxx\dyy=100\dyy\divide\dxx by\nd
\divide\dyy by\nd\hgt=#2\unit\raise\yoff\rlap{\kern#1\unit\kern\xoff
\raise\hgt\copy\dotb\loop\ifnum\nd>0\advance\nd-1\advance\hgt0.01\dy
\kern0.01\dx\raise\hgt\copy\dotb\advance\dx\dxx\advance\dy\dyy\repeat}}}
\newcommand\ptu[3]{\point{#1}{#2}{\cpt\raise.8ex\clap{$\scriptstyle{#3}$}}}
\newcommand\ptd[3]{\point{#1}{#2}{\cpt\raise-1.6ex\clap{$\scriptstyle{#3}$}}}
\newcommand\ptr[3]{\point{#1}{#2}{\cpt\raise-.4ex\rlap{$\ \scriptstyle{#3}$}}}
\newcommand\ptl[3]{\point{#1}{#2}{\cpt\raise-.4ex\llap{$\scriptstyle{#3}\ $}}}
\newcommand\ptlu[3]{\point{#1}{#2}{\raise.6ex\clap{$\scriptstyle{#3}$}}}
\newcommand\ptld[3]{\point{#1}{#2}{\raise-1.6ex\clap{$\scriptstyle{#3}$}}}
\newcommand\ptlr[3]{\point{#1}{#2}{\raise-.4ex\rlap{$\,\scriptstyle{#3}$}}}
\newcommand\ptll[3]{\point{#1}{#2}{\raise-.4ex\llap{$\scriptstyle{#3}\,\,$}}}
\newcommand\pt[2]{\point{#1}{#2}{\cpt}}
\newcommand\px[2]{\point{#1}{#2}{\raise-2.5pt\clap{$\star$}}}
\newcommand\thnline{\varline{400}{.3}}
\title{Minimal symmetric differences of lines in projective planes}
\author{Paul Balister%
\thanks{Department of Mathematical Sciences, University of Memphis, Memphis TN 38152, USA}
\and B\'ela Bollob\'as%
\thanks{Department of Pure Mathematics and Mathematical Statistics,
Wilberforce Road, Cambridge CB3 0WB, UK, and
Department of Mathematical Sciences, University of Memphis, Memphis TN 38152, USA}
\and Zolt\'an F\"uredi%
\thanks{Alfr\'ed R\'enyi Institute of Mathematics, 13--15 Re\'altanoda Street, 1053 Budapest, Hungary.
Research supported in part by the Hungarian National Science Foundation OTKA 104343,
and by the European Research Council Advanced Investigators Grant 267195.}
\and John Thompson%
\thanks{Department of Pure Mathematics and Mathematical Statistics,
Wilberforce Road, Cambridge CB3 0WB, UK}}
\begin{document}

\maketitle

\begin{abstract}
Let $q$ be an odd prime power and let $f(r)$ be the minimum size of the symmetric
difference of $r$ lines in the Desarguesian projective plane $PG(2,q)$.
We prove some results about the function $f(r)$, in particular showing that
there exists a constant $C>0$ such that $f(r)=O(q)$ for $Cq^{3/2}<r<q^2-Cq^{3/2}$.
\end{abstract}

\section{Introduction}

Let $q$ be an odd prime power and consider the Desarguesian projective plane
$PG(2,q)$.
(For detailed definitions of lines, coordinates, conics, etc, see, e.g., the monograph
Hirschfeld~\cite{Hirs}.)
Write $\cP$ and $\cL$ for the set of points and lines of $PG(2,q)$
respectively. We shall consider the subsets of $\cP$ or $\cL$ as elements of
a vector space isomorphic to $\F_2^N$, $N:=q^2+q+1$, and will switch between
the `subset' and `vector' interpretations without further comment. For
example, for subsets $A$ and $B$ of $\cP$ or $\cL$, $A+B$ represents the
symmetric difference of $A$ and~$B$.

Define for $0\le r\le N$,
\begin{equation}\label{f1}
 f(r)=\min\Big\{\big|\sum_{i=1}^r \ell_i\big|
 :\ell_1,\dots,\ell_r\in\cL\text{ distinct}\Big\},
\end{equation}
that is the minimal symmetric difference of $r$ lines in $PG(2,q)$.

The problem of determining $f(r)$ is motivated by the fact that it is an
algebraic
version of the Besicovitch-Kakeya~\cite{Besi} problem in a projective plane ---
determining the minimum size of a set that contains lines (or segments) in many directions.
For more results on Kakeya's problem in the finite fields see~\cite{Faber, BlokMazz}
 and the references there.

Given a set $R$ of lines in $PG(2,q)$, call a point {\em odd\/} if it is
incident with an odd number of lines in $R$, and define the terms
`even point', `single point', `double point', etc., analogously.
Let $\cP^o(R)$ be the set of odd points, and let $\cP^e(R)$, $\cP^k(R)$,
$\cP^{\ge k}(R)$ be defined analogously as the set of points that are even,
multiplicity $k$, and multiplicity at least~$k$, respectively.

Dually, for $S\subseteq\cP$, define $\cL^o(S)$ to be the set of lines
$\ell\in\cL$ such that $|\ell\cap S|$ is odd. Define $\cL^e(S)$, $\cL^k(S)$,
and $\cL^{\ge k}(S)$ analogously.

By duality of lines and points in the projective plane $PG(2,q)$ we can
rewrite \eqref{f1} in the equivalent forms
\begin{equation}\label{f2}
 f(r)=\min_{R\subseteq\cL,\,|R|=r}|\cP^o(R)|
 =\min_{S\subseteq\cP,\,|S|=r}|\cL^o(S)|.
\end{equation}
We shall therefore often switch the viewpoint and consider sets of points
which have odd intersections with few lines.

The next observation, proved below, is that $\cP^o(R)$ almost
determines~$R$, and $\cL^o(S)$ almost determines~$S$. Indeed, the
$N$ vectors specified by $\cL$ span an  $(N-1)$-dimensional subspace of
$\F_2^{\cP}$ and their only linear dependency is $\sum_{\ell\in\cL}\ell=0$.
This gives that $\cP^o(R)=\cP^o(R')$ iff either $R=R'$ or $R'=\cL\setminus R$.
Indeed, it is well known that the $N\times N$ point line 0--1 incidency
matrix $A$ has rank $N-1$ (one can consider $AA^T=J+qI$ and this has rank
$N-1$ over $\F_2$, see, e.g., Ryser~\cite{Ry}). The following useful lemma
is based on this observation.

\begin{lemma}\label{le:1}
 If\/ $R=\cL^o(S)$ then $|R|$ is even and either $S=\cP^e(R)$
 (if\/ $|S|$ is odd) or $S=\cP^o(R)$ (if\/ $|S|$ is even).
 Dually, if\/ $S=\cP^o(R)$ then $|S|$ is even and either $R=\cL^e(S)$
 (if\/ $|R|$ is odd) or $R=\cL^o(S)$ (if\/ $|R|$ is even).
\end{lemma}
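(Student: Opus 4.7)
The plan is to encode all incidences via the $N \times N$ point-line incidence matrix $A$ over $\F_2$ and to exploit the identity $AA^T = J + qI$ (with $q+1$ lines through each point and a unique line through any two distinct points), which the paper has just recalled. Writing $\mathbf{1}_T$ for the characteristic vector of a set $T$, the hypothesis $R = \cL^o(S)$ translates to $\mathbf{1}_R = A^T \mathbf{1}_S$ in $\F_2^{\cL}$, because the $\ell$-coordinate of $A^T\mathbf{1}_S$ is exactly $|\ell \cap S| \pmod 2$. Dually, I will use that $\cP^o(R)$ has characteristic vector $A\mathbf{1}_R$, because the $p$-coordinate there counts the lines of $R$ through $p$ modulo $2$.

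First I would establish that $|R|$ is even by a standard double count: $\sum_{\ell \in \cL} |\ell \cap S| = (q+1)|S|$ is even since $q$ is odd, but the parity of this sum equals $|\cL^o(S)| = |R| \pmod 2$, forcing $|R|$ to be even.

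Next I would combine the two encodings above to obtain $\mathbf{1}_{\cP^o(R)} = A\mathbf{1}_R = AA^T \mathbf{1}_S = (J+qI)\mathbf{1}_S = |S|\mathbf{1} + q\mathbf{1}_S \equiv |S|\mathbf{1} + \mathbf{1}_S \pmod{2}$, where the last step again uses $q$ odd. If $|S|$ is even, the first term vanishes and $\cP^o(R) = S$; if $|S|$ is odd, then $\mathbf{1}_{\cP^o(R)} = \mathbf{1} + \mathbf{1}_S$, so $\cP^o(R) = \cP \setminus S$, i.e.\ $S = \cP^e(R)$, since $\cP^e(R)$ is by definition the complement of $\cP^o(R)$ in $\cP$. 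This settles the first half of the lemma.

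The dual statement will follow by exactly the same argument with $A$ and $A^T$ swapped: one uses $A^TA = J + qI$, which holds because each line contains $q+1$ points and any two distinct lines meet in a unique point, and one double-counts $\sum_{p \in \cP} |\{\ell \in S : p \in \ell\}\,|$ to get the parity of $|S|$. The whole proof reduces to one computation in $\F_2$, so I do not anticipate a real obstacle; the only point that requires care is the case split on the parity of $|S|$, which is precisely what produces the $\cP^o$ versus $\cP^e$ dichotomy in the statement.
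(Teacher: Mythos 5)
Your proof is correct and is essentially the paper's argument in matrix form: the paper computes $\cP^o(\cL^o(\{p\}))=\cP-\{p\}$ and extends by $\F_2$-linearity, which is exactly your identity $AA^T=J+qI\equiv J+I \pmod 2$ applied coordinatewise (an identity the paper itself invokes just before the lemma). The only cosmetic difference is that you treat the odd-$|S|$ case by an explicit computation while the paper dispatches it via $\cP^e(R)=\cP\setminus\cP^o(R)$; no gap either way.
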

\begin{proof}
The maps $\cL^o$ and $\cP^o$ can be thought of as $\F_2$-linear maps
between the set of subsets of $\cP$ and $\cL$, each regarded as a vector
space isomorphic to~$\F_2^N$. For $p\in\cP$,
$|\cL^o(\{p\})|=|\{\ell\in\cL:p\in\ell\}|=q+1$ is even, so $|\cL^o(S)|$
is even for all $S\subseteq\cP$. Moreover
\[
 \cP^o(\cL^o(\{p\}))=\sum_{\ell\ni p}\ell=\cP-\{p\}\in \F_2^{\cP}
\]
as the number $q+1$ of lines through $p$ is even and there is a unique
line through $p$ and $p'$ for every $p'\ne p$. By linearity,
$\cP^o(\cL^o(S))=\sum_{p\in S}(\cP-\{p\})=S$ when $|S|$ is even, and
so $\cP^o$ has rank at least $N-1$. Also, $\cP^o(\cL)=\emptyset$
as every point is in an even number of lines. Hence the kernel of
$\cP^o$ is $\{0,\cL\}$. Similarly the kernel of $\cL^o$ is $\{0,\cP\}$.
The result now follows as $\cP^e(R)=\cP\setminus \cP^o(R)$
and $\cL^e(R)=\cL\setminus\cL^o(R)$.
\end{proof}

\begin{lemma}\label{l:sym}
 For $0\le r\le N$, $f(N-r)=f(r)$.
\end{lemma}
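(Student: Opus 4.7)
The plan is to exploit the fact, already observed in the paragraph before \Lm{1}, that the set $\cL$ of all lines sums to zero in $\F_2^{\cP}$. Concretely, since $q$ is odd, every point $p\in\cP$ lies on $q+1$ lines, an even number, so
\[
 \sum_{\ell\in\cL}\ell=0\quad\text{in }\F_2^\cP.
\]
This is the only identity we need.

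Given any set $R\subseteq\cL$ with $|R|=r$, I would set $R':=\cL\setminus R$, so $|R'|=N-r$. Working in $\F_2^\cP$ and using the displayed identity,
\[
 \sum_{\ell\in R'}\ell=\sum_{\ell\in\cL}\ell+\sum_{\ell\in R}\ell=\sum_{\ell\in R}\ell,
\]
hence $\cP^o(R')=\cP^o(R)$ and in particular $|\cP^o(R')|=|\cP^o(R)|$. Complementation $R\mapsto R'$ is a bijection between $r$-element and $(N-r)$-element subsets of $\cL$ that preserves the size of the odd-point set, so taking the minimum on both sides of \eqref{f2} yields $f(r)=f(N-r)$.

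There is no real obstacle here; the only thing to be careful about is the parity: the proof uses that $q+1$ is even, i.e.\ that $q$ is odd, which is a standing assumption throughout the paper. The argument is essentially a one-line consequence of the unique linear dependency $\sum_{\ell\in\cL}\ell=0$ highlighted just before \Lm{1}, so the write-up can be extremely short.
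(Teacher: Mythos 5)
Your proposal is correct and is essentially identical to the paper's proof: both replace $R$ by its complement $\cL\setminus R$ and use the identity $\sum_{\ell\in\cL}\ell=0$ (equivalently $\sum_{\ell\notin R}\ell=\sum_{\ell\in R}\ell$) to conclude that complementation preserves $|\cP^o(\cdot)|$. Your added remark justifying the identity via $q+1$ being even is a fine, if slightly more explicit, elaboration of what the paper takes for granted.
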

\begin{proof}
Replacing any set $R=\{\ell_1,\dots,\ell_r\}$ by its complement
$\cL\setminus R$ and noting that
$\sum_{\ell\notin R}\ell=\sum_{\ell\in R}\ell$, we find that $f(N-r)\le f(r)$.
Reversing the roles of $r$ and $N-r$ gives $f(N-r)\ge f(r)$.
\end{proof}

\begin{lemma}\label{l:cong}
 Let\/ $R$ be any set of\/ $r$ lines in~$\cL$. Then
 \[
  r(q+2-r)\le|\cP^o(R)|\le rq+1
 \]
 and
 \[
  |\cP^o(R)|\equiv r(q+2-r)\bmod4.
 \]
 In particular, $f(r)\ge r(q+2-r)$ and\/ $f(r)\equiv r(q+2-r)\bmod 4$.
\end{lemma}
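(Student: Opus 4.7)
For each point $p \in \cP$, write $m(p) = |\{i : p \in \ell_i\}|$ for its multiplicity in $R$. Two elementary double counts drive everything: counting point--line incidences gives $\sum_p m(p) = r(q+1)$, and since any two distinct lines meet in a unique point, $\sum_p m(p)(m(p)-1) = r(r-1)$. Adding these yields $\sum_p m(p)^2 = r(q+r)$. All three conclusions will follow by combining these moment identities with appropriate pointwise inequalities or congruences.

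For the lower bound I plan to use the pointwise inequality $[m \text{ odd}] \ge 2m - m^2$, valid for every non-negative integer $m$ (for odd $m$ the right-hand side is at most $1$, and for even $m$ it is at most $0$; a one-line case check). Summing over $p$ gives $|\cP^o(R)| \ge 2r(q+1) - r(q+r) = r(q+2-r)$. For the upper bound I first note $\cP^o(R) \subseteq \bigcup_i \ell_i$, and then prove $|\bigcup_{i=1}^r \ell_i| \le rq+1$ by induction on $r$: the base cases $r = 0,1$ are immediate, and in the inductive step a new distinct line meets each previously chosen line in a single point that already lies in the union, hence it contributes at most $q$ new points.

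For the congruence modulo~$4$, the key observation is that $m^2 \equiv [m \text{ odd}] \pmod{4}$ for every non-negative integer $m$, because squares of integers are $0$ or $1$ modulo~$4$ according to parity. Summing gives $|\cP^o(R)| \equiv \sum_p m(p)^2 = r(q+r) \pmod{4}$, and finally $r(q+r) - r(q+2-r) = 2r(r-1) \equiv 0 \pmod{4}$ since one of $r$ and $r-1$ is even. I foresee no real obstacle; the only slightly non-obvious ingredients are the pointwise inequality and the pointwise mod-$4$ congruence, both of which are routine case checks once one has the two moment identities in hand.
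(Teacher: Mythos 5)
Your proof is correct and rests on the same two double counts as the paper's, namely $\sum_p m(p)=r(q+1)$ and $\sum_p m(p)(m(p)-1)=r(r-1)$; in particular your mod-$4$ argument via $m^2\equiv[m\text{ odd}]\pmod 4$ is just a repackaging of the paper's observation that $i(2-i)\equiv[i\text{ odd}]\pmod 4$. The only cosmetic differences are that you extract the lower bound from the moment identities via the pointwise inequality $[m\text{ odd}]\ge 2m-m^2$ (the paper instead counts, on each line, the $q+2-r$ points lying on no other line of $R$), and that you bound $|\bigcup_i\ell_i|$ by induction rather than applying the triangle inequality for symmetric differences two lines at a time; all of these steps are sound.
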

\begin{proof}
Each line of $R$ contains at least $q+1-(r-1)=q+2-r$ points that do not lie
on any other line of~$R$. Thus there are at least $r(q+2-r)$ points lying on
a single line, and so in particular $|\cP^o(R)|\ge r(q+2-r)$. On the other
hand, one line contains $q+1$ points and the symmetric difference of two
lines contains exactly $2q$ points. Thus $|\cP^o(R)|\le rq+1$ for $r\le 2$.
For $r>2$ write $R=R'\cup\{\ell,\ell'\}$. Then by induction
\begin{align*}
 |\cP^o(R)|&=|\cP^o(R')+\cP^o(\{\ell,\ell'\})|\\
 &\le |\cP^o(R')|+|\cP^o(\{\ell,\ell'\})|\\
 &\le ((r-2)q+1)+2q=rq+1.
\end{align*}
Now let $t_i=|\cP^i(R)|$ be the set of points of multiplicity~$i$. Then
$\sum it_i=r(q+1)$ is the number of points in all the lines counted with
multiplicity, and $\sum i(i-1)t_i=r(r-1)$ is the number of intersection
points between ordered pairs of lines counted with multiplicity.
Subtracting gives $\sum i(2-i)t_i=r(q+2-r)$. But $i(2-i)\equiv 0\bmod 4$
when $i$ is even and $i(2-i)\equiv1\bmod 4$ when $i$ is odd.
Thus $r(q+2-r)\equiv \sum_{i\text{ odd}}t_i=|\cP^o(R)|\bmod 4$.
\end{proof}

The function $f(r)$ is easily determined for $0\le r\le q+1$ (and hence
by \Lm{sym} also for $N-q-1\le r\le N$).

\begin{theorem}\label{t:init}
 For $0\le r\le q+1$, $f(r)=r(q+2-r)$.
\end{theorem}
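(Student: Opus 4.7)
The lower bound $f(r)\ge r(q+2-r)$ is already supplied by \Lm{cong}, so the task reduces to constructing, for each $0\le r\le q+1$, a set $R$ of $r$ lines with $|\cP^o(R)|=r(q+2-r)$.

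Inspecting the proof of \Lm{cong}, equality in $|\cP^o(R)|\ge r(q+2-r)$ holds precisely when every line of $R$ contributes exactly $q+2-r$ singletons---which forces it to meet the other $r-1$ lines in $r-1$ \emph{distinct} points---and, additionally, every remaining point of $\bigcup R$ has even multiplicity. Both conditions are guaranteed if $R$ is in \emph{general position}, meaning that no three lines of $R$ are concurrent: each line then meets the others in $r-1$ distinct points, and every multi-incidence point lies on exactly two lines of $R$, hence is a double point. The odd-point count is then $r(q+2-r)$, as required.

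It remains to exhibit, for each $0\le r\le q+1$, a set of $r$ lines in general position in $PG(2,q)$. By point-line duality, this is equivalent to producing an $r$-arc, i.e., a set of $r$ points no three of which are collinear. Since $q$ is odd, any non-degenerate conic is a $(q+1)$-arc; taking any $r$ of its points and dualising yields the required configuration.

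I do not expect any genuine obstacle in executing this plan: the argument is entirely classical, and the cutoff $r\le q+1$ is exactly dictated by the fact that for $q$ odd the maximum size of an arc in $PG(2,q)$ is $q+1$ (so for $r\ge q+2$ three lines are forced to be concurrent, and the general-position construction breaks down). A routine sanity check confirms the boundary values $f(0)=0$, $f(1)=q+1$, $f(2)=2q$, matching the empty set, a single line, and the symmetric difference of two distinct lines.
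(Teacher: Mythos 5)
Your proof is correct and is essentially the paper's argument seen through the duality of \eqref{f2}: the paper takes $r$ points on the conic $XZ=Y^2$ (a $(q+1)$-arc) and counts the $r(q+2-r)$ lines meeting it in exactly one point, which is precisely the dual of your $r$ lines in general position obtained by dualising an $r$-arc. The counting and the role of the conic are identical, so this is the same proof up to duality.
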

\begin{proof}
\Lm{cong} implies $f(r)\ge r(q+2-r)$, so it remains by \eqref{f2} to
construct a set $S$ of points with $|S|=r$ and $|\cL^o(S)|=r(q+2-r)$.

Let $C=\{[s^2\cl st\cl t^2]:[s\cl t]\in PG(1,q)\}$ be the conic $XZ=Y^2$. We note
that all lines $\ell$ intersect $C$ in at most 2 points, and $|\ell\cap C|=1$
if and only if $\ell$ is one of the $q+1$ tangent lines to $C$.

Let $S$ be any subset of $C$ of size~$r$. No line intersects $S$ in more
than two points and so for any $p\in S$ exactly $r-1$ lines through $p$
meet $C$ at another point of~$S$, while $(q+1)-(r-1)=q+2-r$ lines through $p$
fail to meet $C$ at any other point of~$S$. Thus there are exactly $r(q+2-r)$
lines that meet $S$ in an odd number of points and so $|\cL^o(S)|=r(q+2-r)$
as required.
\end{proof}

The function $f(r)$ cannot vary too rapidly; trivially we have
$|f(r+1)-f(r)|\le q+1$. In fact, we can say slightly more.

\begin{theorem}\label{t:lip}
 For $0<r<N-2$, $|f(r+1)-f(r)|\le q-1$.
\end{theorem}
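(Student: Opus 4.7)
The plan is to prove the two inequalities $f(r+1)\le f(r)+q-1$ and $f(r)\le f(r+1)+q-1$ separately, in each case by exchanging a single line in an extremal configuration. The key identity is that for any $R\subseteq\cL$ and any line $\ell$,
\[
 |\cP^o(R)+\ell|=|\cP^o(R)|+(q+1)-2|\ell\cap\cP^o(R)|,
\]
so adding or removing one line changes $|\cP^o(\cdot)|$ by at most $q-1$ provided $\ell$ meets $\cP^o(R)$ in at least one point.

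For the bound $f(r+1)\le f(r)+q-1$, I would fix $R\subseteq\cL$ with $|R|=r$ attaining $f(r)$. Since $0<r<N$, Lemma~\ref{le:1} ensures $\cP^o(R)\ne\emptyset$, as the only $R$ with $\cP^o(R)=\emptyset$ are $R=\emptyset$ and $R=\cL$. Pick any $p\in\cP^o(R)$: then $p$ lies on an odd number of lines of $R$, and since $q+1$ is even, at least one of the $q+1$ lines through $p$ lies outside $R$. Adding such a line $\ell$ to $R$ produces an $(r+1)$-set whose odd-point count is at most $f(r)+(q+1)-2=f(r)+q-1$.

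For the reverse bound $f(r)\le f(r+1)+q-1$, I would fix $R'\subseteq\cL$ with $|R'|=r+1$ attaining $f(r+1)$ and show that some $\ell\in R'$ meets $\cP^o(R')$. Otherwise every odd point of $R'$ would avoid every line of $R'$; but since every odd point is, by definition, on at least one line of $R'$, this would force $\cP^o(R')=\emptyset$, and hence by Lemma~\ref{le:1} $R'\in\{\emptyset,\cL\}$, contradicting $0<r+1\le N-2$. Removing such an $\ell$ from $R'$ then gives the desired bound via the same identity.

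There is no serious obstacle; the argument is essentially a one-line exchange. The only point requiring care is producing a line meeting $\cP^o$ at each step, and in both directions this reduces to Lemma~\ref{le:1} together with the parity of $q+1$.
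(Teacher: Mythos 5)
Your proof is correct. Both you and the paper prove the theorem by a single-line exchange in which the added or removed line meets the current odd-point set $\cP^o(R)$ in at least one point, so that $|\cP^o|$ changes by at most $(q+1)-2=q-1$; the difference lies in how the exchangeable line is produced. For the upper bound the paper works dually with $S=\cP^o(R)$: it exhibits a line $\ell_e$ through a point of $S$ meeting $S$ in an even positive number of points and a line $\ell_o$ meeting $S$ oddly, and then uses the dichotomy $R\in\{\cL^o(S),\cL^e(S)\}$ from Lemma~\ref{le:1} to conclude that at least one of the two lies outside $R$ and can be added. You instead observe directly that an odd point $p$ lies on an odd number of lines of $R$, so, as $q+1$ is even, an odd (hence positive) number of lines through $p$ avoid $R$, and any such line contains $p\in\cP^o(R)$. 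For the reverse inequality the paper appeals to the symmetry $f(N-r)=f(r)$ of Lemma~\ref{l:sym}, whereas you argue directly that some line of an extremal $(r+1)$-set contains an odd point (every odd point lies on at least one line of the set, and $\cP^o\ne\emptyset$ because the set is neither $\emptyset$ nor $\cL$) and remove it. Your version is slightly more economical, needing only the nontriviality of the kernel of $\cP^o$ rather than the full dichotomy of Lemma~\ref{le:1} or Lemma~\ref{l:sym}, while the paper's route reuses machinery it has already established; both arguments are complete.
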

Note that $f(0)=f(N)=0$ and $f(1)=f(N-1)=q+1$, so this result fails for
$r=0,N-1$. On the other hand, the inequality can be sharp. For example,
$f(2)-f(1)=f(q+1)-f(q)=q-1$ by \Th{init}. There are other examples,
e.g., $f(2q-1)=q+1$ and $f(2q)=2$ (see \Th{2q} below).
\begin{proof}
Assume $|R|=r$ and $\cP^o(R)=S$ with $|S|=f(r)$.
Note that $S\ne\emptyset$ as $R\ne\emptyset,\cL$.
Pick $p\in S$. Assume every line $\ell$ through $p$ intersects $S$ in an odd number
of points. Then every line through $p$ intersects $S\setminus p$ is an even number
of points. Since distinct lines through $p$ partition $S\setminus p$,
we see that $|S\setminus p|$ is even and hence $|S|$ is odd,
contradicting Lemma~\ref{le:1}.    %%% it contradicts.
Thus there exists a line $\ell_e$ that meets $S$ in an even
(and positive) number of points. If all $\ell\in\cL$ met $S$ in an even number
of points then $\cL^o(S)=\emptyset$ and so $S=\emptyset$ or~$\cP$, a contradiction.
Thus there exists a line $\ell_o$ that meets $S$ in an odd number of points.
As $R=\cL^o(S)$ or $\cL^e(S)$, either $\ell_e$ or $\ell_o$ fails to lie in~$R$.
Adding such a line to $R$ increases $r$ by one and increases $S$ by at most $q-1$,
implying $f(r+1)-f(r)\le q-1$.

Replacing $r$ by $N-r-1$ and applying \Lm{sym}
gives $f(r+1)-f(r)=-(f(N-r)-f(N-r-1))\ge -(q-1)$, completing the
proof of \Th{lip}.
\end{proof}

\section{The case of $q+2$ lines}

Our next aim is to prove that the jump $f(q+2)-f(q+1)=f(q+2)-(q+1)$
is not too small.

\begin{theorem}\label{t:f(q+2)}\,
$f(q+2)= 2q-2$ for $q\le 13$.
More generally, for $q\ge 7$ we have $\frac32(q+1)\le f(q+2)\le 2q-2$.
\end{theorem}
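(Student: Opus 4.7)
The upper bound is realized by $S = C \cup \{p\}$, where $C$ is the conic of \Th{init} and $p$ is any point external to $C$. Since $p$ is external, the $q+1$ lines through $p$ split as $2$ tangents, $(q-1)/2$ secants and $(q-1)/2$ external lines to $C$. A short case analysis, classifying lines by (whether they pass through $p$) and (how they meet $C$), shows that $\cL^o(S)$ consists of the $q-1$ tangents to $C$ not through $p$, the $(q-1)/2$ secants through $p$ and the $(q-1)/2$ external lines through $p$, giving $|\cL^o(S)| = 2(q-1)$.

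For the lower bound, take $|S| = q+2$ and set $a_i = |\{\ell \in \cL \cl |\ell \cap S| = i\}|$. Double counting gives $\sum_i i\,a_i = \sum_i i(i-1)\,a_i = (q+1)(q+2)$, so $a_1 = \sum_{i \ge 3} i(i-2)\,a_i$ and hence
\[
  |\cL^o(S)| = \sum_{i \text{ odd}} a_i = \sum_{i \ge 3} \bigl(i(i-2) + [i \text{ odd}]\bigr)\,a_i \;\ge\; 4T, \qquad T := \sum_{i \ge 3} a_i,
\]
because every coefficient on the right is at least $4$ for $i \ge 3$. A separate pigeonhole through a line $\ell_0$ achieving $k := \max_\ell |\ell \cap S|$ gives $|\cL^o(S)| \ge k(k-2)$: each of the $k$ points of $\ell_0 \cap S$ is on at least $k-2$ tangents (among its $q$ lines other than $\ell_0$), and no such tangent can pass through two points of $\ell_0 \cap S$ since the only line doing so is $\ell_0$ itself. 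Since $q+2$ exceeds the maximum arc size $q+1$ in $PG(2,q)$, we have $k \ge 3$.

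The argument then splits by the size of $k$. If $k$ is large enough that $k(k-2) \ge \tfrac{3}{2}(q+1)$ (i.e.\ $k \gtrsim \sqrt{3q/2}$), the tangent bound already suffices. Otherwise $k$ is small, so $S$ is \emph{arc-like}, and I invoke Segre's theorem: every $(q+1)$-arc in $PG(2,q)$ is a conic. If $S$ contains a $(q+1)$-subarc $A$, then $A$ is a conic and $S = A \cup \{p\}$, so a direct incidence count (as in the upper bound, with a sub-case if $p$ is internal to $A$) gives $|\cL^o(S)| \in \{2q-2,\,2q+2\}$, both at least $\tfrac{3}{2}(q+1)$ once $q \ge 7$. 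Otherwise $S$ has no $(q+1)$-subarc, so for every $p \in S$ there is a trisecant of $S$ avoiding $p$. A refined counting --- weighting each $p \in S$ by the number of trisecants missing it, and relating this to $T$ via a Segre-type tangent analysis of the nearly-maximal arcs inside $S$ --- is used to push $T$ up to $3(q+1)/8$, at which point $|\cL^o(S)| \ge 4T \ge \tfrac{3}{2}(q+1)$ finishes the proof.

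The main obstacle is this last sub-case: the naive inequality $(q-1)T \ge q+2$ obtained by summing ``each $p \in S$ is avoided by some trisecant'' only gives $T \gtrsim 1$, far weaker than what is needed. Forcing the jump to $T \ge 3(q+1)/8$ will require a delicate analysis showing that every $(q+1)$-subset of $S$ already contains $\Omega(q)$ collinear triples, presumably by applying Segre's lemma of tangents to a maximal subarc and using the non-existence of $(q+2)$-arcs in $PG(2,q)$ for $q$ odd. The strengthening $f(q+2) = 2q-2$ for $q \le 13$ refines the general lower bound in the remaining cases $q \in \{7,9,11,13\}$ (together with the easy verifications at $q = 3, 5$), and I would expect it to follow from a tight version of the above analysis or by direct case-check inside the small planes.
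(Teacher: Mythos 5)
Your upper bound is correct and is essentially the paper's second construction: $S=C\cup\{p\}$ with $p$ an external point of the conic gives $|\cL^o(S)|=2q-2$, and \eqref{f2} converts this to $f(q+2)\le 2q-2$. (The paper also derives the same bound more cheaply from Theorems~\ref{t:lip} and \ref{t:init} together with the mod~$4$ congruence of \Lm{cong}.) Your preliminary reductions for the lower bound are also sound: $|\cL^o(S)|\ge 4T$ with $T=\sum_{i\ge3}a_i$, the unisecant count $|\cL^o(S)|\ge k(k-2)$, and $k\ge3$ since no $(q+2)$-arc exists for $q$ odd.

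However, the lower bound has a genuine gap, and it sits exactly where the real content of the theorem is. After disposing of the cases ``$k$ large'' and ``$S$ is a conic plus a point,'' you are left with a $(q+2)$-set whose maximal secant size is small (roughly $k\lesssim\sqrt{3q/2}$) and which contains no $(q+1)$-subarc, and for this case you need $T\ge 3(q+1)/8$. You do not prove this; you state yourself that the naive count gives only $T\gtrsim1$ and that the jump to $\Omega(q)$ collinear triples ``will require a delicate analysis'' via Segre's lemma of tangents. That step is not a routine refinement --- nothing in your counting framework rules out, a priori, a $(q+2)$-set with only a bounded number of $\ge3$-secants and no long secant, and establishing that such sets do not exist (or are conics plus a point) is the whole difficulty. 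The paper closes this case by entirely different machinery: it works with the dual configuration of $q+2$ lines, splits on whether some line has no triple point (handled by Bichara--Korchm\'aros plus the Blokhuis--Mazzocca affine blocking-set argument resting on the Brouwer--Schrijver/Jamison bound $2q-1$), and otherwise shows that the odd-point set is a \emph{minimal} blocking set of size below $3(q+1)/2$, to which Sz\H{o}nyi's $1\bmod p$ theorem applies and forces the contradiction $|S|\ge 2q+4$. Some substitute of comparable strength is unavoidable; as written, your argument establishes neither $\frac32(q+1)\le f(q+2)$ for $q\ge7$ nor the exact value $2q-2$ for $q\le13$ (which in the paper falls out because $3(q+1)/2>2q-6$ in that range, not from a case-check of the small planes).
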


To prove this we shall use several lemmas, some classical results of this
 topic. Most of their proofs use either R\'edei's method
 (see. e.g.,~\cite{LoSc}) or some version of Combinatorial Nullstellensatz
 (see, e.g.,~\cite[Theorem~1.2]{CN}).
Arrangements of $q+2$ lines are the most investigated part of finite geometries.
In the following, a {\em triple point\/} with respect to a set
of lines $R$ will refer to a point which lies on {\em at least\/} three
lines.

\begin{lemma}[Bichara and Korchm\'aros~\cite{BichKorc}]\label{l:first}\,
 Let\/ $R$ be a set of\/ $q+2$ lines in $PG(2,q)$.
Then there are at most two lines without triple points.  \qed
\end{lemma}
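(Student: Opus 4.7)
The plan is to suppose for contradiction that three lines $\ell_1,\ell_2,\ell_3\in R$ each avoid every triple point, and derive an algebraic impossibility in $\F_q^*$. The first step is a pigeonhole observation: if $\ell\in R$ has no triple point, then the other $q+1$ lines of $R$ must meet $\ell$ in $q+1$ \emph{distinct} points, since any coincidence would create a triple point on $\ell$. As $|\ell|=q+1$, every point of $\ell$ then lies on exactly two lines of $R$. Applied to $\ell_1,\ell_2,\ell_3$, this forces them to be non-concurrent, and makes the three vertices $p_{ij}=\ell_i\cap\ell_j$ into distinct double points of $R$.

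I would then analyze the $q-1$ remaining lines $R':=R\setminus\{\ell_1,\ell_2,\ell_3\}$. Each $m\in R'$ avoids the three vertices (otherwise some $p_{ij}$ would have multiplicity at least~$3$) and meets each $\ell_i$ in a single non-vertex point. The pigeonhole argument also shows that the map $m\mapsto m\cap\ell_i$ is a bijection from $R'$ onto the $q-1$ non-vertex points of $\ell_i$. Placing coordinates so that $\ell_1,\ell_2,\ell_3$ are the three coordinate axes and writing a general $m\in R'$ as $aX+bY+cZ=0$ with $a,b,c\in\F_q^*$, the intersections with $\ell_1,\ell_2,\ell_3$ are parametrized by the ratios $t_1=-c/b$, $t_2=-c/a$, $t_3=-b/a$, which satisfy the multiplicative identity $t_1 t_3=-t_2$.

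Bijectivity of the three projections forces $\sigma\cl t_1\mapsto t_3$ to be a permutation of $\F_q^*$, and hence $\tau\cl t_1\mapsto -t_1\sigma(t_1)=t_2$ must also be a permutation. Comparing products over $\F_q^*$ gives
\[
\prod_{t\in\F_q^*}\tau(t)=(-1)^{q-1}\Bigl(\prod_{t\in\F_q^*}t\Bigr)\Bigl(\prod_{t\in\F_q^*}\sigma(t)\Bigr)=1\cdot(-1)\cdot(-1)=1,
\]
whereas any permutation of $\F_q^*$ satisfies $\prod\tau(t)=\prod_{t\in\F_q^*}t=-1$ because $\F_q^*$ is cyclic of even order (this is precisely where $q$ odd is used). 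The contradiction $1=-1$ in $\F_q$ completes the argument.

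The main obstacle is really the first step: squeezing out of the triple-point-free hypothesis the full incidence structure---that every point of $\ell_i$ is a double point and that the three projections $R'\to\F_q^*$ are bijections satisfying the relation $t_1 t_3=-t_2$. Once those structural facts are pinned down, the rest is a clean Wilson-style parity computation.
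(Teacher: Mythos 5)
Your proof is correct and takes essentially the same route as the paper's: after normalizing the three triple-point-free lines to the coordinate axes, both arguments show that the coefficient ratios of the remaining $q-1$ lines sweep out $\F_q^*$ bijectively on each axis and then derive $1=-1$ from three applications of the Wilson product $\prod_{t\in\F_q^*}t=-1$. Your permutation formulation with $\sigma$ and $\tau$ is just a repackaging of the paper's direct computation with $\prod a_i$, $\prod b_i$, and $\prod a_i/b_i$.
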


A {\em blocking set\/} in the affine plane $AG(2,q)$ or %and
in the projective plane $PG(2,q)$ is a set $B$ of points such
that each line is incident with at least one point of~$B$.

\begin{lemma}[Brouwer and Schrijver~\cite{BrSc} and Jamison~\cite{Jami}]\label{l:second}\,
 Let\/ $B$ be a blocking set in $AG(2,q)$.
 Then $B$ consists of at least\/ $2q-1$ points. \qed
\end{lemma}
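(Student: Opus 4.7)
The plan is to adapt the polynomial method of Brouwer--Schrijver and Jamison. First I would translate so that $(0,0)\in B$, view $AG(2,q)$ as $\F_q^2$, and set $B':=B\setminus\{(0,0)\}$, $n:=|B'|$. Since we want $|B|\ge 2q-1$, it suffices to show $n\ge 2q-2$. Next, I would observe that every affine line not through the origin has the form $\{aX+bY=1\}$ for a unique nonzero $(a,b)\in\F_q^2$, and that the blocking hypothesis — together with the fact that $(0,0)$ lies off such a line — forces some $(x,y)\in B'$ to satisfy $ax+by=1$.

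The central step is to encode this via the polynomial
\[
P(X,Y)\;:=\;\prod_{(x,y)\in B'}(xX+yY-1)\;\in\;\F_q[X,Y].
\]
This polynomial has total degree exactly $n$: each factor contains the linear form $xX+yY$, which is nonzero because $(x,y)\ne(0,0)$, and the product of nonzero polynomials over a field is nonzero. The observation of the previous paragraph then gives $P(a,b)=0$ for every $(a,b)\in\F_q^2\setminus\{(0,0)\}$, while $P(0,0)=(-1)^n\ne 0$. So $P$ realises, up to sign, the characteristic function of the origin on $\F_q^2$.

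To convert this into a degree bound, I would reduce $P$ modulo the ideal $(X^q-X,\,Y^q-Y)$, obtaining a polynomial $\tilde P$ of degree at most $q-1$ in each variable. Since such a reduced polynomial is uniquely determined by its values on $\F_q^2$, one can read off
\[
\tilde P(X,Y)\;=\;(-1)^n(1-X^{q-1})(1-Y^{q-1}),
\]
of total degree exactly $2(q-1)$. The final step — and the one that really does the work — is to note that the reduction is degree non-increasing: each application of $X^q\mapsto X$ or $Y^q\mapsto Y$ strictly lowers the total degree of the monomial it acts on by $q-1$. Hence $2(q-1)=\deg\tilde P\le\deg P=n$, giving $|B|=n+1\ge 2q-1$. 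I expect the only slightly delicate point to be this degree-monotonicity of the reduction; everything else is a direct check.
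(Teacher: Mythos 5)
Your argument is correct, and it builds exactly the same polynomial as the paper: after translating a point of $B$ to the origin, both proofs form the product of the linear forms $xX+yY-1$ over the remaining points and observe that it vanishes everywhere on $\F_q^2$ except at the origin. Where you diverge is the final degree-extraction step. The paper compares $g$ with $h=(X^{q-1}-1)(Y^{q-1}-1)$ and invokes the Combinatorial Nullstellensatz on $g-h$: if $\deg g<2q-2$ then $g-h$ would have degree exactly $2q-2$ with nonzero $X^{q-1}Y^{q-1}$ coefficient yet vanish on $\F_q\times\F_q$, which is forbidden. You instead reduce $P$ modulo $(X^q-X,\,Y^q-Y)$, use the uniqueness of the representative of degree at most $q-1$ in each variable to identify $\tilde P=(-1)^n(1-X^{q-1})(1-Y^{q-1})$, and conclude via the (correct, easily checked) fact that the reduction never increases total degree. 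This is essentially the original Brouwer--Schrijver route; it is entirely self-contained, needing only Lagrange interpolation over $\F_q^2$ rather than citing Alon's theorem, while the paper's version is shorter given that the Nullstellensatz is already in its toolbox. Both are complete proofs of the bound $|B|\ge 2q-1$.
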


\begin{lemma}[Sz\H{o}nyi~\cite{Szonyi}]\label{l:szonyi}\,
 Let\/ $B$ be a {\rm minimal} blocking set in $PG(2,q)$
 of size less than $3(q+1)/2$  where $q=p^h$ for some prime~$p$. % p
 Then all lines meet $B$ in $1$ {\rm mod} $p$ points. \qed
\end{lemma}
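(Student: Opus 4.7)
The plan is to use the R\'edei polynomial method of Sz\H{o}nyi, with Blokhuis's theorem on fully reducible lacunary polynomials as the analytic engine. First, I would exploit minimality of $B$: writing $|B|=q+k$ with $k<(q+1)/2$, every point $P\in B$ must lie on some \emph{tangent} line meeting $B$ only at $P$, for otherwise $B\setminus\{P\}$ would still block every line. Fix a point $P_0\in B$ together with a tangent $\ell_\infty$ at $P_0$, and use $\ell_\infty$ as the line at infinity. Then $U:=B\setminus\{P_0\}$ consists of affine points, $|U|=q+k-1$, and $U$ blocks every affine line whose direction at infinity is not~$P_0$.

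Next, I would introduce the R\'edei polynomial
\[
 R(X,Y)=\prod_{(a,b)\in U}(X+aY-b)\in\F_q[X,Y],
\]
of $X$-degree $q+k-1$. For each $m\in\F_q$, the specialisation $R(X,m)\in\F_q[X]$ splits over $\F_q$, and the multiplicity of the root $c$ equals the number of points of $U$ lying on the affine line $y=mx+c$, which coincides with $|\ell\cap B|$ for its projective closure~$\ell$ (a line not through $P_0$). Because $U$ blocks every line of slope $m$, every $c\in\F_q$ is a root of $R(X,m)$, so
\[
 R(X,m)=(X^q-X)\,g_m(X),\qquad \deg g_m=k-1<\tfrac{q-1}{2}.
\]
I would then invoke Blokhuis's lacunary polynomial theorem on the fully reducible polynomial $R(X,m)$: since the quotient $g_m$ has degree $<(q-1)/2$, every root of $R(X,m)$ has multiplicity $\equiv 1\bmod p$. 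Consequently every line of $PG(2,q)$ whose direction at infinity is not $P_0$ meets $B$ in $1\bmod p$ points; $\ell_\infty$ itself also satisfies this trivially.

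Lines through $P_0$ other than $\ell_\infty$ are handled by repeating the entire construction starting from a tangent at a different point $P_1\in B$, so that any such line whose direction at infinity in the new coordinate system is not $P_1$ is covered; iterating over a small family of tangent basepoints sweeps out every line of $PG(2,q)$. The main obstacle will be Step~3: cleanly invoking Blokhuis's lacunary polynomial theorem, and checking that the assumption $|B|<3(q+1)/2$ translates exactly into the degree hypothesis $\deg g_m<(q-1)/2$ required for the $\equiv 1\bmod p$ conclusion on root multiplicities. A secondary delicate point is verifying that the iteration in Step~4 genuinely covers every line of $PG(2,q)$, which rests on the minimality of $B$ supplying tangents at a sufficient variety of points.
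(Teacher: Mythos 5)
The paper does not actually prove this lemma: it is quoted from Sz\H{o}nyi~\cite{Szonyi} and stated without proof, so there is no internal argument to compare against. Your sketch does follow the skeleton of Sz\H{o}nyi's real proof — a tangent line supplied by minimality taken as the line at infinity, the R\'edei polynomial of the affine part $U=B\setminus\{P_0\}$, the divisibility $(X^q-X)\mid R(X,m)$, and the degree bound on the quotient coming from $|B|<3(q+1)/2$ (which gives $\deg g_m=k-1\le(q-1)/2$, not the strict inequality you wrote). Your closing step, sweeping out the lines through $P_0$ by re-basing at another point of $B$, is also essentially fine once one disposes of the trivial cases where $B$ is a line or where the line in question is itself a tangent.

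The genuine gap is the ``analytic engine.'' The statement you invoke — that a fully reducible polynomial of the form $(X^q-X)g(X)$ with $\deg g\le (q-1)/2$ automatically has all root multiplicities $\equiv 1\bmod p$ — is false as a statement about polynomials: take $g(X)=(X-c)^{p-1}$, of degree $p-1\le(q-1)/2$ whenever $q\ge 2p-1$, for which the root $c$ of $(X^q-X)g(X)$ has multiplicity exactly $p$; even for $q=p$ the claim fails for $g(X)=X$. Blokhuis's lacunary polynomial theorem applies to $X^qg(X)+h(X)$ with $\gcd(g,h)=1$, a hypothesis violated here since $h=-Xg_m$, and its conclusion is a degree alternative, not a multiplicity congruence. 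The $1\bmod p$ conclusion genuinely requires the two-variable structure that your one-variable specialisations discard: Sz\H{o}nyi writes $R(X,Y)=(X^q-X)f(X,Y)+(Y^q-Y)g(X,Y)$ with $\deg f\le k-1$, studies the algebraic curve $f(X,Y)=0$, and uses a B\'ezout-type count of its intersections with the lines $Y=m$, together with the minimality of $B$, to conclude that $f(X,m)$ is a $p$-th power for every $m$ (equivalently, that all intersection multiplicities are $1\bmod p$). That curve-theoretic argument is the substance of Sz\H{o}nyi's paper and cannot be replaced by a black-box lemma applied to each $R(X,m)$ separately.
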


The following lemma is contained in~\cite{BlokMazz} (top of page~211) %-the
 as a part of a more complex argument.
For completeness we reproduce its proof here.

\begin{lemma}[Blokhuis and Mazzocca~\cite{BlokMazz}]\label{l:third}\,
 Let\/ $R$ be a set of\/ $q+2$ lines with at least one of the lines
 containing no triple points. Then the number of odd points is at least $2q$
 minus the number of lines in $R$ without triple points.
\end{lemma}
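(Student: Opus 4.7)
The plan is to work in the affine plane $AG(2,q)=PG(2,q)\setminus\ell_0$ for a chosen $\ell_0\in R$ with no triple point, enlarge $\cP^o(R)$ by one auxiliary point per remaining triple-point-free line to obtain a blocking set of $AG(2,q)$, and then invoke \Lm{second}.

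First I would pick $\ell_0\in R$ with no triple point (guaranteed by hypothesis). Each of the remaining $q+1$ lines of $R$ meets $\ell_0$ in a unique point, and these intersection points must be pairwise distinct (otherwise $\ell_0$ would carry a triple point), so they exhaust all $q+1$ points of $\ell_0$. Hence every point of $\ell_0$ has multiplicity exactly $2$, no point of $\ell_0$ is odd, and the lines of $R\setminus\{\ell_0\}$ form a transversal of the parallel classes of $AG(2,q)$, one line per class.

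Next, for each line $\ell\ne\ell_0$ of $PG(2,q)$ I would compute the sum of multiplicities over the $q$ affine points of $\ell$. Since exactly one line of $R\setminus\{\ell_0\}$ is parallel to $\ell$, this sum equals $q$ when $\ell\notin R$ and $q+q=2q$ when $\ell\in R\setminus\{\ell_0\}$. Reducing mod $2$ (and using that $q$ is odd) shows that every affine line $\ell\notin R$ carries an odd, hence positive, number of odd points, while every $\ell\in R\setminus\{\ell_0\}$ carries an even number of odd points. Moreover, the constraint $\sum_{p\in\ell\cap AG}\mu(p)=2q$ over $q$ points each of multiplicity $\ge1$ forces $\ell\in R\setminus\{\ell_0\}$ to carry zero odd points if and only if every such $\mu(p)=2$, if and only if $\ell$ carries no triple point.

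Finally, set $U=\cP^o(R)$, let $T\subseteq R\setminus\{\ell_0\}$ be the set of $m-1$ triple-point-free lines of $R\setminus\{\ell_0\}$ (where $m$ is the total number of triple-point-free lines in $R$), and form $U'$ by adjoining to $U$ one arbitrary affine point from each $\ell\in T$. Then $|U'|\le|U|+m-1$, and by the previous step $U'$ meets every affine line of $PG(2,q)$, so $U'$ is a blocking set of $AG(2,q)$. Applying \Lm{second} gives $|U'|\ge 2q-1$, hence $|\cP^o(R)|=|U|\ge 2q-m$. The main obstacle is step two --- in particular, pinning down the equivalence ``no triple point on $\ell\in R\setminus\{\ell_0\}$'' $\Leftrightarrow$ ``no odd point on $\ell$'' --- since this is what allows $U$ to be repaired into a blocking set by adding only $m-1$ extra points; once this is in place, the bound is an immediate application of Brouwer--Schrijver--Jamison.
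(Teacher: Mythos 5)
Your proposal is correct and follows essentially the same route as the paper: choose a triple-point-free line of $R$ as the line at infinity, observe that the remaining $q+1$ lines hit each parallel class of the resulting affine plane exactly once, show that every affine line outside the triple-point-free part of $R$ contains an odd point, and then repair the odd-point set into an affine blocking set with $m-1$ extra points so that Lemma~\ref{l:second} gives $|\cP^o(R)|\ge 2q-m$. Your multiplicity-sum computation ($q$ for lines not in $R$, $2q$ for lines of $R\setminus\{\ell_0\}$) is just a slightly more explicit rendering of the paper's pigeonhole argument, so there is nothing substantive to add.
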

\begin{proof}
Without loss of generality, we may assume that $R$ contains the line at
infinity and that this line has no triple point. Let $L$ be the set of $q+1$
lines in $AG(2,q)$ obtained by restricting the remaining lines of $R$ to
$AG(2,q)$. As the line at infinity contains no triple point, no two lines
in $L$ are parallel. Then as $|L|=q+1$, every line $\ell$ in $AG(2,q)$ is
parallel to precisely one line of~$L$.

\noindent{\bf Claim.}
In $AG(2,q)$ the odd points block all lines in $AG(2,q)$,
except those in $L$ that have no triple points.

Indeed, assume first that $\ell \notin L$. Then $\ell$ intersects $q$ of
the lines in~$L$; indeed it intersects all but the unique line in $L$
parallel to~$\ell$. Since $q$ is odd, $\ell$ has an odd point.

Now assume $\ell\in L$ and has a triple point. As there are $q$
points in $L$ and only $q$ other lines in~$L$, the fact that some
point in $\ell$ meets at least two of these lines implies that there
is a point of $\ell$ which meets no other line of~$L$. Such a point
is a single (and hence odd) point.

Adding one point from each line without a triple point (except the
line at infinity) we obtain a blocking set of the affine plane, which
by \Lm{second} contains at least $2q-1$ points. The result
follows.
\end{proof}

\begin{proof}[Proof of the lower bound in \Th{f(q+2)}]\,
Let $R$ be a set of $q+2$ lines with $f(q+2)=|\cP^o(R)|$, $S:=\cP^o(R)$,
and let $T_3$ be the set of triple points. % and let ... be
We will show  that $|S|\ge 3(q+1)/2$.

First, suppose that $R$ has a line without a triple point.
Then by Lemmas~\ref{l:first} and \ref{l:third} there
are at least $2q-2$ odd points.

Second, suppose all $q+2$ lines in $R$ have triple points and $|S|<2q-2$.
Since  $f(q+2)\equiv 0 \bmod4$ by Lemma~\ref{l:cong}
we may suppose that $|S|\le 2q-6$.

\noindent{\bf Claim.} $S$ is a  minimal blocking set in $PG(2,q)$.

Indeed, every line $\ell$ in $PG(2,q)$ is either in our set
(in which case it contains a single point), or intersects all $q+2$ lines of~$R$.
As $q+2$ is odd, $\ell$ must contain an odd point.

That $S$ is minimal can be seen as follows:
Let $v\in S$ and suppose on the contrary that $S\setminus\{v\}$ meets all lines.
Since $v$ is an odd point, there are $2m+1$ lines of $R$ containing it.
Each of these lines contains at least $2m-1$ additional odd (single) points of~$S$.
Moreover, every line $\ell$ not in $R$ has an odd number of odd points.
Then if $\ell\notin R$ is a line through $v$, we have
$|S\cap \ell|\ge 2$ and hence $|S\cap \ell|\ge 3$. % expanded
In total we find at least $(2m+1)(2m-1)+ 2(q-2m)\geq 2q-1$ odd points beside $v$.
This contradiction completes the proof of the Claim.

We count multiplicities of intersections
as in the proof of \Lm{cong}. If we let $t_i$ be the number of points
that occur in exactly $i$ of our lines, then
$\sum_i it_i=\sum_i i(i-1)t_i=(q+2)(q+1)$. Thus $\sum_i i(i-2)t_i=0$,
 rearranging
\begin{equation}\label{eq:3}
  |S|=\sum_{i\text{ odd}}t_i
  =\sum_{i \geq 3} \left( i(i-2)+ (i \bmod 2)\right)t_i
  =4t_3+ 8t_4+ 16t_5+24t_6+\dots
\end{equation}

Let $R_3\subseteq R$ be the set of lines having a single triple point,
 and that point has degree three, and
 let $R_4\subseteq R$ be the set of lines having a single triple point,
 and that point has degree at least four.
Every line in $R$ has at least one triple point,
 the members of $R\setminus (R_3\cup R_4)$ have at least two.
So adding up the degrees of triple points we obtain
 $\sum_{i\ge 3} it_i = \sum_{\ell\in R} |\ell\cap T_3|\geq 2|R|-|R_3|-|R_4|$.
Consider  $\sum_{i\ge 4}it_i$, it is an upper bound for $|R_4|$.
Summarizing  we obtain
\[
    3t_3+ \sum_{i\ge 4}2it_i\ge 2|R|-|R_3|.
\]
This and \eqref{eq:3} yield $|S|\geq 2q+4-|R_3|$.
Every $R_3$ line meets $S$ in two elements, so %-a
 actually $R_3=\emptyset$ by Lemma~\ref{l:szonyi}
 for $|S| < 3(q+1)/2$.
This contradiction completes the proof of $|S|\ge 3(q+1)/2$.
For $q\le 13$ we note that $3(q+1)/2>2q-6$, so $f(q+2)=2q-2$. % added
\end{proof}

Finally, to show $f(q+2)\le 2q-2$ recall that $f(q+2)\le f(q+1)+(q-1)=2q$
by Theorems \ref{t:lip} and \ref{t:init}, while $f(q+2)\equiv 0\bmod 4$
by \Lm{cong}. Thus $f(q+2)\le 2q-2$.

This upper bound on $f(q+2)$ can also be seen in the following way.
There is an action of $SL(2,q)$ on $PG(2,q)$ in which the orbits
are $A$, $B$, and $C$, where $C$ is the conic described above,
$A$ is the set of points which lie on no tangent of $C$ and $B$ is the set
of points that lie on two tangents of $C$. Now $|\cL^o(C)|=q+1$, so if % A<->B
$p\in A$ then $|\cL^o(C\cup\{p\})|=(q+1)+(q+1)$ as all lines through $p$
change from having an even intersection with $C$ to having an odd intersection
with $C\cup\{p\}$. On the other hand, if $p\in B$ then
$|\cL^o(C\cup\{p\})|=(q+1)+(q-1)-2=2q-2$ as there are $q-1$ lines thorough
$p$ with an even intersection with $C$ and an odd intersection with $C\cup\{p\}$,
while there are 2 lines through $p$ that are tangent to $C$ and so have
odd intersection with $C$ and even intersection with $C\cup\{p\}$.
The result now follows from~\eqref{f2}.

We conjecture that in fact the upper bound is correct in \Th{f(q+2)}.

\begin{conjecture}
$f(q+2)=2q-2$.
\end{conjecture}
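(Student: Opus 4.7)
The upper bound $f(q+2)\le 2q-2$ is already established in the excerpt, so the conjecture reduces to proving the matching lower bound $f(q+2)\ge 2q-2$ for all odd prime powers $q$. \Th{f(q+2)} handles $q\le 13$, and the barrier to extending to all~$q$ is localised to one subcase.

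My plan follows the structure of the proof of \Th{f(q+2)}. Take $R\subseteq\cL$ with $|R|=q+2$ and $S:=\cP^o(R)$, and suppose for contradiction that $|S|<2q-2$; the mod-$4$ congruence from \Lm{cong} forces $|S|\le 2q-6$. If some line of $R$ contains no triple point then \Lm{first} and \Lm{third} already give $|S|\ge 2q-2$, so we may assume every line of $R$ contains a triple point. The argument already in the excerpt then shows that $S$ is a minimal blocking set in $PG(2,q)$ and that $|S|\ge 2q+4-|R_3|$, where $R_3$ is the set of lines in $R$ whose unique triple point has degree exactly~$3$. Since \Lm{szonyi} gives $R_3=\emptyset$ as soon as $|S|<3(q+1)/2$, the outstanding task is to prove $|R_3|\le 6$ in the range $3(q+1)/2\le|S|\le 2q-6$, which in particular forces $q\ge 15$.

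The key observation for this range is that every line $\ell\in R_3$ meets $S$ in exactly two points: the unique triple point of $\ell$, together with the one single point that remains on $\ell$ after the $q-1$ double points are accounted for. Sz\H{o}nyi's lemma would immediately contradict this since $2\not\equiv 1\pmod p$ for any odd prime~$p$, but it is unavailable for $|S|\ge 3(q+1)/2$. The first prong of my approach is therefore to push the 1-mod-$p$ conclusion past the Sz\H{o}nyi threshold by invoking refined structural results of Sziklai, Sz\H{o}nyi, and Weiner on minimal blocking sets of size close to $2q$ in $PG(2,q)$; in particular, for $q=p$ prime, partial extensions of Sz\H{o}nyi's theorem are known to cover substantial portions of the range in question. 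Whenever such an extension applies we obtain $R_3=\emptyset$, contradicting $|S|\le 2q-6$.

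Where the 1-mod-$p$ theorems fall short, I would pursue a direct combinatorial bound on $|R_3|$. The rigid incidence pattern of an $R_3$-line (one triple point of degree~$3$, exactly $q-1$ double points, and one single point) forces strong restrictions between distinct $R_3$-lines: they either share their unique triple points or can only be coupled in very limited ways consistent with the identities $\sum_i it_i=\sum_i i(i-1)t_i=(q+2)(q+1)$ from the proof of \Lm{cong}, together with the constraint that the leftover single point on each $R_3$-line lies in no other line of $R$. The goal is to extract $|R_3|\le 6$ from a finite case analysis of these configurations. The main obstacle, in either prong, is precisely the absence of an off-the-shelf 1-mod-$p$ theorem valid throughout $3(q+1)/2\le|S|\le 2q-6$; closing this gap is the substantive content of the conjecture, and I expect it to require either a polynomial/R\'edei-style argument tailored to the specific blocking sets $S$ that arise from $q+2$ lines, or a sharper use of the geometry of the $q+2-|R_3|$ lines in $R$ that carry more than one triple point.
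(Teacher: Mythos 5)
There is a genuine gap, and indeed the statement you are addressing is left \emph{open} in the paper: it is stated as a conjecture, and the paper's Theorem~\ref{t:f(q+2)} proves $f(q+2)=2q-2$ only for $q\le13$, together with the weaker bound $\frac32(q+1)\le f(q+2)\le 2q-2$ for $q\ge7$. Your reconstruction of that partial argument is accurate --- the reduction to the case where every line of $R$ has a triple point, the fact that $S$ is then a minimal blocking set with $|S|\ge 2q+4-|R_3|$, the observation that each $R_3$-line meets $S$ in exactly two points, and the use of \Lm{szonyi} to kill $R_3$ when $|S|<3(q+1)/2$ are all exactly what the paper does. You have also correctly localised the obstruction to the window $3(q+1)/2\le|S|\le 2q-6$.

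But neither prong of your plan actually closes that window. The first prong appeals to ``refined structural results of Sziklai, Sz\H{o}nyi, and Weiner'' without citing a specific theorem that extends the $1\bmod p$ property to minimal blocking sets of all sizes up to $2q-6$; no such off-the-shelf result is known in that full range, and you concede as much when you write ``where the 1-mod-$p$ theorems fall short.'' The second prong asserts that the incidence pattern of $R_3$-lines ``forces strong restrictions'' yielding $|R_3|\le6$ by ``a finite case analysis,'' but no such analysis is given, and the constraints you list do not obviously deliver it: two $R_3$-lines may simply meet at a common double point, and the identities $\sum it_i=\sum i(i-1)t_i=(q+2)(q+1)$ together with ``each $R_3$-line owns a distinct single point'' only give $|R_3|\le t_1\le|S|$, which is far weaker than $|R_3|\le 6$. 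What you have written is a reasonable research programme that matches the authors' own framing of the problem, not a proof; the substantive content of the conjecture --- which you yourself acknowledge --- remains unproved.
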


\section{Exact values near $2q$}

A few more values of $f(r)$ are known when $r$ is small. To derive % more
these we shall make use of the following result.

\begin{lemma}\label{l:rs}
 For even $s$, $f(s)$ is the minimum even $r$ such that there exists a % even
 set\/ $R$ of lines with $|R|=r$ and $|\cP^o(R)|=s$.
\end{lemma}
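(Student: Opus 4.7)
The plan is to prove the claimed equality $f(s)=g(s)$, where $g(s)$ denotes the minimum even $r$ for which some set $R$ of $r$ lines has $|\cP^o(R)|=s$, by two short inequalities, in each case using Lemma~\ref{le:1} to convert between a point set and its associated set of lines with odd intersection.

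For $g(s)\le f(s)$, I would use the dual form $f(s)=\min_{|S|=s}|\cL^o(S)|$ from~\eqref{f2}. Let $S$ be a minimizer and put $R:=\cL^o(S)$. Lemma~\ref{le:1} tells us $|R|$ is even, and, because $|S|=s$ is even, that $S=\cP^o(R)$. Hence $R$ is admissible in the definition of $g(s)$, with $|R|=f(s)$, so $g(s)\le f(s)$; in particular the minimum defining $g(s)$ is taken over a nonempty set.

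For $f(s)\le g(s)$, take $R$ realizing $g(s)$, so $|R|=g(s)$ is even and $|\cP^o(R)|=s$, and set $S:=\cP^o(R)$. The dual half of Lemma~\ref{le:1}, applied to this $S$, yields $|S|$ even (consistent with $|S|=s$) and, since $|R|$ is even, $R=\cL^o(S)$. Therefore $|\cL^o(S)|=|R|=g(s)$, and $|S|=s$, so $f(s)\le g(s)$ by~\eqref{f2}.

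No substantive obstacle arises here: the lemma is essentially a bookkeeping corollary of Lemma~\ref{le:1}, whose content is that $S\mapsto\cL^o(S)$ and $R\mapsto\cP^o(R)$ restrict to mutually inverse bijections between the even-size subsets of $\cP$ and the even-size subsets of $\cL$. The only mild care needed is to invoke the correct parity branch of that lemma in each direction, remembering that $|\cL^o(S)|$ is always even so the hypothesis $|R|$ even is exactly what is needed to recover $R$ from $S$.
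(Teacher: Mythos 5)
Your proof is correct and follows essentially the same route as the paper: both directions reduce to the two parity branches of Lemma~\ref{le:1}, using that $|\cL^o(S)|$ is always even to identify $R$ with $\cL^o(S)$ when $|R|$ is even, and that $|S|$ even forces $S=\cP^o(R)$. The only cosmetic difference is that you name the auxiliary quantity $g(s)$ and present the two inequalities in the opposite order.
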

\begin{proof}
Assume $R$ is a set of lines with $|R|=r$ and $\sum_{\ell\in R}\ell=S$
with $|S|=s$. Now $|\cL^o(S)|$ is even while $|\cL^e(S)|$ is odd. Hence
$R=\cL^o(S)$ as $r$ is even. Thus, by \eqref{f2}, $f(s)\le r$. Conversely,
if $f(s)=r$ and $|S|=s$ with $|\cL^o(S)|=r$, then $r$ is even and, % r is even
setting $R=\cL^o(S)$, we have $|R|=r$ and $|\cP^o(R)|=|S|=s$ as $s$ is even.
\end{proof}

\begin{theorem}\label{t:2q}
 $f(2q-1)=q+1$, $f(2q)=2$, $f(2q+1)=q-1$.
\end{theorem}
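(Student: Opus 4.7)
My plan is to use the dual formulation $f(r)=\min_{|S|=r}|\cL^o(S)|$ from~\eqref{f2}, fix two distinct lines $\ell_1,\ell_2$ meeting at a point $p$, and produce the three upper bounds from explicit related point sets built from $\ell_1\cup\ell_2$. For $f(2q+1)\le q-1$ I would take $S=\ell_1\cup\ell_2$; for $f(2q)\le 2$ I would take $S=\ell_1+\ell_2=(\ell_1\cup\ell_2)\setminus\{p\}$; and for $f(2q-1)\le q+1$ I would take $S=(\ell_1+\ell_2)\setminus\{x\}$ for some $x\in\ell_1\setminus\{p\}$. A short parity case-check on $|\ell\cap S|$, split by whether $\ell\in\{\ell_1,\ell_2\}$ and whether $\ell$ passes through the relevant distinguished point ($p$ or $x$), and using that $q$ is odd, identifies the odd-intersection lines in the three cases as the $q-1$ lines through $p$ other than $\ell_1,\ell_2$; the pair $\{\ell_1,\ell_2\}$; and $\{\ell_2\}$ together with the $q$ lines through $x$ other than $\ell_1$.

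The matching lower bounds come from Lemma~\ref{le:1} combined with the bound $|\cP^o(R)|\le|R|q+1$ of Lemma~\ref{l:cong}. The bound $f(2q)\ge 2$ is immediate: $|\cP^o(R)|$ is always even by Lemma~\ref{le:1}, and vanishes only when $R\in\{\emptyset,\cL\}$, which is excluded since $0<2q<N$. For $f(2q\pm 1)$ I argue by contradiction. Suppose $|S|=2q\pm1$ and set $R=\cL^o(S)$. Since $|S|$ is odd, Lemma~\ref{le:1} gives $S=\cP^e(R)$, so
\[
|\cP^o(R)|=N-|S|=q^2-q+2 \quad\text{or}\quad q^2-q
\]
in the two cases. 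If $|\cL^o(S)|<q+1$ (resp.\ $<q-1$), then the evenness of $|R|$ (Lemma~\ref{le:1}) forces $|R|\le q-1$ (resp.\ $|R|\le q-3$), and Lemma~\ref{l:cong} then bounds $|\cP^o(R)|$ by $(q-1)q+1=q^2-q+1$ (resp.\ $(q-3)q+1=q^2-3q+1$), contradicting the exact values $q^2-q+2$ and $q^2-q$.

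There is no real obstacle here; the key observation is that the evenness of $|\cL^o(S)|$ lets me strengthen any strict inequality $|R|<q\mp 1$ to $|R|\le q\mp 1-2$, which is exactly enough slack in Lemma~\ref{l:cong}'s bound $|\cP^o(R)|\le|R|q+1$ to close the contradiction. Everything else is routine case-analysis for the three explicit constructions.
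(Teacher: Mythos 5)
Your proposal is correct, and its lower-bound half is essentially the paper's argument: both pass to $R=\cL^o(S)$, use Lemma~\ref{le:1} to note that odd $|S|$ forces $\cP^o(R)=\cP\setminus S$, hence $|\cP^o(R)|=N-(2q\pm1)$, and then invoke the bound $|\cP^o(R)|\le|R|q+1$ of Lemma~\ref{l:cong} to force $|R|$ to be large. Where you genuinely diverge is in how the two sides are closed up. The paper obtains the upper bounds $f(2q\pm1)\le q+1$ indirectly, from $f(2q)=2$ via the Lipschitz bound of Theorem~\ref{t:lip}, and then pins down the exact values by combining this with the mod~$4$ congruences of Lemma~\ref{l:cong} and the lower bound $|R|>q-3$; in particular it never exhibits a $(2q+1)$-point set with only $q-1$ odd lines. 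You instead build all three extremal configurations explicitly from $\ell_1\cup\ell_2$ and its modifications, and replace the mod~$4$ bookkeeping by the weaker but sufficient observation that $|R|=|\cL^o(S)|$ is even, which upgrades $|R|<q+1$ to $|R|\le q-1$ (resp.\ $|R|<q-1$ to $|R|\le q-3$) and makes the numerics close. Your route is more self-contained (it needs neither Theorem~\ref{t:lip} nor the congruence part of Lemma~\ref{l:cong}) and has the added value of displaying the optimal point sets; the paper's route is shorter given the machinery it has already developed and shows off how the congruence constraints alone can determine $f$ at these points. Both arguments are complete and correct.
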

\begin{proof}
If $|R|=2$ then $|\cP^o(R)|=2q$, so $f(2q)\le2$ by \Lm{rs}.
However $f(r)>0$ and $f(r)$ is even for $0<r<N$, so $f(2q)=2$.
Thus $f(2q-1),f(2q+1)\le q+1$ by \Th{lip}. Also
$f(2q+1)\equiv(2q+1)(-q+1)\equiv q-1\bmod 4$
and $f(2q-1)\equiv (2q-1)(-q+3)\equiv q+1\bmod 4$
by \Lm{cong}. Thus it is sufficient to show that
$f(2q\pm1)>q-3$. As $2q\pm1$ is odd, there exists a $R$ with
$|R|=f(2q\pm1)$ and $|\cP^o(R)|=N-(2q\pm1)\ge q^2-q$.
But $|\cP^o(R)|\le q|R|+1$ by \Lm{cong}, so $|R|>q-3$.
\end{proof}

\section{A graph clique decomposition lemma}

The values of $f(r)$ for $q+2<r<2q-1$ remain to be determined,
and indeed $f(r)$ is unknown for many values of $r<Cq^{3/2}$, although
some non-trivial bounds are given by Lemmas \ref{l:32} and \ref{l:e}
below. For larger $r$, between $Cq^{3/2}$ and $N-Cq^{3/2}$, we shall
show much more. Indeed it seems that $f(r)$ can be determined
for most values of $r$ in this range, although an explicit
description of these values seems difficult.

Suppose that $s$ is even (the case when $s$ is odd follows by
considering $f(N-s)$). By \Lm{rs} and duality it is enough to determine
for each even $r$ in turn whether or not there exists a set $S$
of points such that $|\cL^o(S)|=s$. Any set of points $S$ induces
an edge-decomposition of the complete graph $K_S$ with vertex set $S$
into cliques on the sets $\ell\cap \cal S$, $\ell\in\cL$. Indeed,
every pair of points of $S$ lie in a unique line $\ell\in\cL$
so each edge $K_S$ lies in a unique clique $K_{\ell\cap S}$.
We show that $s=|\cL^o(S)|$ can be determined in terms of
the sizes of these cliques.

\begin{lemma}\label{l:cv}
 Suppose $r=|S|$ is even and $|\cL^o(S)|=rq-4t$. For $\ell\in\cL$
 write $r_\ell=|S\cap\ell|$. Then
 $\sum_{\ell\in\cL}\big\lfloor\frac{r_\ell}{2}\big\rfloor=\frac{r}{2}+2t$.
\end{lemma}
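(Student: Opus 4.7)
The plan is to rewrite each floor $\lfloor r_\ell/2 \rfloor$ in terms of $r_\ell$ and the parity indicator of $r_\ell$, and then use the two standard double-counting identities for the quantities $\sum_\ell r_\ell$ and $|\mathcal{L}^o(S)|$. Concretely, I would start from the identity
\[
 \Big\lfloor \frac{r_\ell}{2} \Big\rfloor = \frac{r_\ell - \epsilon_\ell}{2},
\]
where $\epsilon_\ell \in \{0,1\}$ is $1$ precisely when $r_\ell$ is odd. Summing over all $\ell \in \mathcal{L}$ gives
\[
 \sum_{\ell \in \mathcal{L}} \Big\lfloor \frac{r_\ell}{2}\Big\rfloor
 = \frac{1}{2}\Big( \sum_{\ell \in \mathcal{L}} r_\ell - |\mathcal{L}^o(S)|\Big),
\]
so the whole statement reduces to evaluating the two sums on the right.

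The first sum is the usual incidence count: every point of $S$ lies on exactly $q+1$ lines, so $\sum_\ell r_\ell = r(q+1)$. The second sum is given by hypothesis, since $|\mathcal{L}^o(S)| = rq - 4t$ by definition of $t$. Substituting,
\[
 \sum_{\ell \in \mathcal{L}} \Big\lfloor \frac{r_\ell}{2}\Big\rfloor
 = \frac{r(q+1) - (rq - 4t)}{2} = \frac{r + 4t}{2} = \frac{r}{2} + 2t,
\]
which is the desired equality.

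There is no real obstacle: the hypothesis that $|S|$ is even ensures $|\mathcal{L}^o(S)|$ is well-defined (and one could note via Lemma~\ref{le:1} that it is even, consistent with $rq - 4t$), but parity is not needed for the arithmetic itself. The content of the lemma is essentially the observation that the defect between $r_\ell$ and $2\lfloor r_\ell/2\rfloor$, summed over all lines, is exactly the number of odd lines through $S$.
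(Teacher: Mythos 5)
Your proof is correct and is essentially identical to the paper's: both rest on the identity $r_\ell - 2\lfloor r_\ell/2\rfloor = [r_\ell \text{ odd}]$ together with the incidence count $\sum_\ell r_\ell = r(q+1)$. Nothing is missing.
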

\begin{proof}
As there are $q+1$ lines through each point of~$S$,
$\sum_{\ell\in\cL} r_\ell=r(q+1)$. Thus
\[
 rq-4t=|\cL^o(S)|=\sum_{r_\ell\text{ odd}}1
 =\sum_\ell (r_\ell-2\big\lfloor\tfrac{r_\ell}{2}\big\rfloor)
 =rq+r-2 \sum_\ell \big\lfloor\tfrac{r_\ell}{2}\big\rfloor.
\]
Hence $\sum\big\lfloor\frac{r_\ell}{2}\big\rfloor=\frac{r}{2}+2t$.
\end{proof}

Note that by \Lm{cong} $s=|\cL^o(S)|$ must be of the form $rq-4t$
with $0\le t\le \binom{r}{2}$. Since we are interested in the smallest $r$
for which a suitable set $S$ exists, typically we expect $t$ to
be relatively small and $r$ not much bigger that $s/q$. We can therefore
reduce the problem to the question of (a) whether there is {\em any}
clique decomposition of $K_r$ into cliques of size $r_1,\dots,r_n$
with a given value of $\sum\big\lfloor\frac{r_i}{2}\big\rfloor$,
and (b) whether such a decomposition can be realized by a set
of points inside $PG(2,q)$.

We call an edge-decomposition $\Pi$ of $K_r$ into cliques
of orders $r_1,\dots,r_n$ a {\em simple decomposition} if there is at
most one value of $i$ with $r_i>3$. In other words, $K_r$ is decomposed
as single edges, triangles, and at most one larger clique.
We write $M(\Pi)$ for the sum $\sum_{i=1}^n\big\lfloor\frac{r_i}{2}\big\rfloor$.

\begin{lemma}\label{l:krd}
 Suppose we are given an edge-decomposition $\Pi$ of\/ $K_r$
 with $M(\Pi)<\frac14 r(\sqrt{4r-3}-1)$. Then there exists
 a simple edge-decomposition $\Pi'$ of\/ $K_r$ with $M(\Pi')=M(\Pi)$.
\end{lemma}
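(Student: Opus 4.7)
My plan is to construct the simple decomposition $\Pi'$ directly from the value $M := M(\Pi)$, rather than by modifying $\Pi$. Every simple decomposition of $K_r$ has the form $K_m + tK_3 + eK_2$ for some $m \in \{0\} \cup \{4, 5, \ldots, r\}$ and non-negative integers $t, e$ satisfying the conservation identities
\[
\lfloor m/2 \rfloor + t + e = M, \qquad \binom{m}{2} + 3t + e = \binom{r}{2},
\]
which determine $t = (\binom{r}{2} - \binom{m}{2} + \lfloor m/2 \rfloor - M)/2$ and $e = M - \lfloor m/2 \rfloor - t$. Integrality of $t$ and $e$ follows from the parity identity $M \equiv \binom{r}{2} \pmod 2$, which holds for any decomposition because $\lfloor k/2 \rfloor \equiv \binom{k}{2} \pmod 2$ for every $k \geq 2$.

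The first step is to choose $m$ making $t, e \geq 0$. The admissible window for $M$ given $m$ reads
\[
\lfloor m/2 \rfloor + \tfrac{1}{3}\bigl(\binom{r}{2} - \binom{m}{2}\bigr) \leq M \leq \lfloor m/2 \rfloor + \binom{r}{2} - \binom{m}{2},
\]
and I would verify that, as $m$ ranges over $\{0\} \cup \{4, \ldots, r\}$, the union of these windows covers every valid $M$ in the range $[\lfloor r/2 \rfloor, \binom{r}{2}]$. The hypothesis $M < r(\sqrt{4r-3}-1)/4$ pins $m$ into a specific regime (essentially $m$ of order $\sqrt{r}$), where the threshold itself arises as the minimum $M$ achievable by a simple decomposition whose large clique has size about $\sqrt{r}$.

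The second and main step is realizability: exhibiting an actual edge-decomposition of $K_r$ into $K_m + tK_3 + eK_2$ for the chosen parameters. After placing the $K_m$ on $m$ arbitrary vertices, I need to pack exactly $t$ edge-disjoint triangles in the residual $G := K_r \setminus K_m$, the remaining $|E(G)| - 3t = e$ edges becoming single-edge cliques. I would construct this packing by starting from a large initial triangle packing in $K_r$ (built from near-Steiner-triple-system constructions), removing those triangles that use edges of $K_m$, and then adjusting on $G$ by local swap moves until exactly $t$ triangles remain. This realizability step is the principal obstacle: it requires case analysis on the residues of $r$ and $m$ modulo $6$, and the bound on $M$ encodes precisely the slack needed to keep $t$ strictly below the maximum triangle-packing number of $G$ so that the local swaps always succeed.
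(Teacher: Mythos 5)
Your plan differs from the paper's in a fundamental way: you discard $\Pi$ and try to build $\Pi'$ from the number $M$ alone, and this is where the argument breaks. The ``admissible window'' you write down, $\lfloor m/2\rfloor+\tfrac13\bigl(\binom{r}{2}-\binom{m}{2}\bigr)\le M\le\lfloor m/2\rfloor+\binom{r}{2}-\binom{m}{2}$, records only the arithmetic constraints $t,e\ge0$; it is not the set of values realizable by a simple decomposition with big clique $K_m$. A triangle edge-disjoint from $E(K_m)$ can contain at most one vertex of the $K_m$, hence must use an edge of $K_{r-m}$, so at most $\binom{r-m}{2}$ triangles fit; the true window is $\lfloor m/2\rfloor+m(r-m)-\binom{r-m}{2}\le M\le\lfloor m/2\rfloor+m(r-m)+\binom{r-m}{2}$ (the paper's inequality \eqref{e:b3}). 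Your lower end $\lfloor m/2\rfloor+\tfrac13|E(G)|$ corresponds to $t=\tfrac13\bigl(m(r-m)+\binom{r-m}{2}\bigr)$, which exceeds $\binom{r-m}{2}$ exactly when $m>r-m-1$ --- and that is precisely the regime the hypothesis forces (your ``$m$ of order $\sqrt r$'' is backwards: the bound $M<\tfrac14r(\sqrt{4r-3}-1)\approx\tfrac12r^{3/2}$ forces the big clique to have size $m=r-O(\sqrt r)$, i.e.\ $r-m$ of order $\sqrt r$). So no amount of local swapping or mod-$6$ case analysis will realize the lower portion of your windows; the requested triangles simply do not fit. Moreover, once you replace your windows by the correct ones, their union does \emph{not} cover all of $[\lfloor r/2\rfloor,\binom{r}{2}]$: for $r-m\ll\sqrt r$ consecutive windows have length $(r-m)(r-m-1)$ but are spaced roughly $r$ apart, so there are genuine gaps. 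Hence the statement ``every $M$ of the right parity below the threshold is achieved by a simple decomposition'' is false, and a proof that uses only the value $M$ cannot succeed.

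What rescues the lemma --- and what your proposal is missing --- is that the \emph{same} inequality \eqref{e:b3} holds for the given decomposition $\Pi$: summing $\lfloor r_i/2\rfloor\ge|E_1\cap E(C_i)|-|E_2\cap E(C_i)|$ over the cliques of $\Pi$ shows that $M(\Pi)$ automatically lies in the realizable window for $m=r_1$, the size of $\Pi$'s own largest clique. The hypothesis on $M(\Pi)$ is used only once, to rule out $r_1<r/2$ via the two lower bounds \eqref{e:b1} and \eqref{e:b2} (whose crossing point produces the threshold $\tfrac14r(\sqrt{4r-3}-1)$); with $r_1\ge r/2$ the realization is then elementary --- no Steiner systems needed --- since every triangle can be taken with one vertex in $C_1$: decompose $K_{r-r_1}$ into at most $r_1$ matchings and hang each matching off a distinct vertex of $C_1$, then trade triangles for three $K_2$'s to walk $M$ up in steps of $2$ to the target. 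You should restructure your argument to keep $\Pi$ in play: first bound $r_1$ from below, then set $m=r_1$ and verify $M(\Pi)$ lies in the correct (not merely arithmetic) window for that $m$.
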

\begin{proof}
Assume $\Pi$ decomposes $K_r$ into cliques of orders $r_1,\dots,r_n$
with $r_1\ge r_2\ge\dots\ge r_n$.
Let $C_i$ be the $i$'th clique. Then there are
$r_1(r-r_1)$ edges from $V(C_1)$ to $V(K_r)\setminus V(C_1)$. Moreover,
each clique $C_i$, $i>1$, can meet $C_1$ in at most
one vertex and hence covers at most $r_i-1$ of these edges.
Thus $\sum_{i>1}(r_i-1)\ge r_1(r-r_1)$ and hence
\begin{equation}\label{e:b1}
 M(\Pi)\ge \sum_{i=1}^n\frac{r_i-1}{2}\ge
 \frac{r_1-1}{2}+\frac{r_1(r-r_1)}{2}.
\end{equation}
On the other hand there are $\binom{r}{2}$ edges to be covered
in total, so
\begin{equation}\label{e:b2}
 M(\Pi)\ge \sum_{i=1}^n\frac{r_i-1}{2}=\sum_{i=1}^n\frac{1}{r_i}\binom{r_i}{2} %=
 \ge \frac{1}{r_1}\binom{r}{2}.
\end{equation}
For $r_1<r/2$, the bound in  \eqref{e:b1} is increasing and the bound
in \eqref{e:b2} is decreasing as $r_1$ increases, so the smallest bound
on $M(\Pi)$ occurs when
the two bounds are equal. It can be checked that this
occurs when $r=r_1^2-r_1+1$ with a common bound
$M(\Pi)\ge \frac12r(r_1-1)=\frac14 r(\sqrt{4r-3}-1)$. This contradicts
the assumption on $M(\Pi)$, so we may assume $r_1\ge r/2$.

Let $E_1$ be the set of $r_1(r-r_1)$ edges joining $C_1$
to the rest of $K_r$ and $E_2$ be  the set of $\binom{r-r_1}{2}$
edges of $K_r$ not meeting~$C_1$. For each clique $C_i$, $i>1$, we note
that for all $r_i\ge2$,
\[
 |E_1\cap E(C_i)|-|E_2\cap E(C_i)|\le \Big\lfloor\frac{r_i}{2}\Big\rfloor
 \le |E_1\cap E(C_i)|+|E_2\cap E(C_i)|.
\]
Indeed, the right hand side is just $\binom{r_i}{2}$, while the left
hand side is either $(r_i-1)-\binom{r_i-1}{2}$
or $-\binom{r_i}{2}$ depending on whether or not $C_i$ meets some
vertex of~$C_1$. Note that the lower bound is achieved if $r_i\in\{2,3\}$
and $C_i$ meets~$C_1$. Summing over all cliques gives
\begin{equation}\label{e:b3}
 \Big\lfloor\frac{r_1}{2}\Big\rfloor+|E_1|-|E_2|\le
 M(\Pi)\le \Big\lfloor\frac{r_1}{2}\Big\rfloor+|E_1|+|E_2|.
\end{equation}
Also note that
$\lfloor\frac{r_i}{2}\rfloor\equiv\binom{r_i}{2}\bmod 2$, so that
$M(\Pi)$ is equivalent to either bound modulo~2.

As $r_1\ge r/2$, the graph on $E_1\cup E_2$ can be packed with
$|E_2|$ triangles each meeting $C_1$. Indeed, it is enough to decompose
$K_{r-r_1}$ completely into at most $r_1$ partial matchings
$M_1,\dots, M_{r_1}$ and then join each matching to a distinct
vertex of $C_1$ to obtain sets of edge-disjoint triangles.
For even $r-r_1$, it is well-known that $K_{r-r_1}$
can be decomposed into $r-r_1-1<r_1$ perfect matchings.
For odd $r-r_1$ decompose $K_{r-r_1+1}$ into $r-r_1\le r_1$
perfect matchings and remove a single vertex to give a decomposition
of $K_{r-r_1}$ into $r-r_1$ partial matchings. Completing the
packing of $E_1\cup E_2$ by including $K_2$s covering the
remaining edges of $E_1$ gives a decomposition $\Pi''$ of $K_r$ which achieves
the lower bound $M_0=\lfloor r_1/2\rfloor+|E_1|-|E_2|$ in \eqref{e:b3}.
Now replacing $(M(\Pi)-M_0)/2\le |E_2|$ of the triangles of this packing
with three $K_2$s, allows us to increase $M(\Pi'')$ in steps of 2 until
we get to a packing $\Pi'$ of $C_1$, edges, and triangles, with
$M(\Pi')=M(\Pi)$. % added Pi''
\end{proof}

\begin{lemma}\label{l:krd2}
 Let\/ $m=\lceil\sqrt{r-3}\rceil-1$. Then for any integer $s$
 with $s\le\binom{r}{2}$, $s\equiv\binom{r}{2}\bmod2$, and
 $s\ge \lfloor\frac{r-m}{2}\rfloor+\frac{m}{2}(2r-3m+1)$ there exists
 a simple decomposition $\Pi$ of $K_r$ with $M(\Pi)=s$.
\end{lemma}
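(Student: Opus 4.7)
The plan is to parameterize simple decompositions of $K_r$ by a pair $(a,t)$, where $a$ is the size of the large clique (with $a=0$ meaning no large clique is present) and $t$ is the number of triangles used. First I would verify the identity
\[
 M(\Pi) = \lfloor a/2\rfloor + \binom{r}{2} - \binom{a}{2} - 2t
\]
by a simple edge count: the $\binom{r}{2}-\binom{a}{2}$ edges outside the large clique partition into $t$ triangles and some number of loose edges, each contributing $1$ to $M(\Pi)$, plus $\lfloor a/2\rfloor$ from the large clique itself. The parity condition $s\equiv\binom{r}{2}\pmod 2$ stated in the lemma is then automatic since $\lfloor a/2\rfloor\equiv\binom{a}{2}\pmod 2$ for every $a$.

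Next, for each $m'$ with $0\le m'\le r/2$, I would construct a simple decomposition with large clique $K_{r-m'}$ realising every integer $t\in\{0,1,\ldots,\binom{m'}{2}\}$. The key is to build a maximum type-A triangle packing: by Vizing's theorem, $K_{m'}$ on the outside vertices decomposes into at most $m'$ matchings, and assigning a distinct vertex of the large clique to each matching extends each matching edge to a triangle by adjoining that third vertex, which is feasible precisely when $r-m'\ge m'$. Smaller values of $t$ are obtained by reverting one triangle at a time to its three constituent loose edges. Setting $m'=m$ and $t=\binom{m}{2}$ then yields $M(\Pi)=\lfloor(r-m)/2\rfloor+\tfrac{m}{2}(2r-3m+1)$, matching the stated lower bound exactly.

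To cover the remainder of the interval within the parity class, I would iterate upward in $m'$. A direct calculation using $\binom{r}{2}-\binom{r-m'}{2}=m'(2r-m'-1)/2$ gives
\[
 M_{\min}(r-m'-1)-M_{\max}(r-m') = r-(m'+1)^2 + \varepsilon,\qquad \varepsilon\in\{0,-1\},
\]
so the ranges for consecutive $m'$ meet or overlap (within the parity class) whenever $r-(m'+1)^2\le 2$. Since $m+1=\lceil\sqrt{r-3}\rceil$ forces $(m+1)^2\ge r-3$, we have $r-(m+1)^2\le 3$; the boundary case $r-(m+1)^2=3$ forces $r-m$ to be even (because $m(m+1)$ is always even), giving $\varepsilon=-1$ and hence a gap of exactly $2$, which is closed automatically by parity. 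Iterating up through $m'\le r/2$ and then handling the top of the range via the no-large-clique case $a=0$ with near-optimal (Steiner or Kirkman) triangle packings in $K_r$ completes the coverage.

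I expect the main obstacle to be this final coverage argument: beyond the clean Vizing range $m'\le r/2$ one must splice in triangle packings of $K_r\setminus K_a$ for small $a$ (where cross edges are abundant but outside edges dominate), and verify that the chain of ranges from $s_{\min}$ up to $\binom{r}{2}$ contains no uncovered parity-consistent value. The boundary case with $r-3$ a perfect square is the delicate point, and showing that parity always closes the worst-case gap of $3$ seems to be the crux of the proof.
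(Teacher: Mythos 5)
Your proposal is correct and follows essentially the same route as the paper: a large clique of size $a=r-m'$ together with triangles obtained by joining the (at most $m'$) matchings of a proper edge-colouring of $K_{m'}$ to distinct clique vertices, undoing triangles two units of $M$ at a time, and then chaining the resulting intervals over $m'$, with the top of the range handled by packing a modest number of triangles into $K_r$ with no large clique. The paper dismisses the interval-covering step as ``a simple but tedious exercise''; your explicit gap formula $M_{\min}(a-1)-M_{\max}(a)=r-(m'+1)^2+\varepsilon$ and the parity resolution of the boundary case $r-(m+1)^2=3$ supply exactly that omitted verification, and they check out.
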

\begin{proof}
From the proof of \Lm{krd} we know that we can construct a simple
a decomposition for any $s\equiv \binom{r}{2}$ and
\[
 \big\lfloor\tfrac{r_1}{2}\big\rfloor+r_1(r-r_1)-\tbinom{r-r_1}{2}
 \le s\le \big\lfloor\tfrac{r_1}{2}\big\rfloor+r_1(r-r_1)+\tbinom{r-r_1}{2}
\]
with $r_1\ge\frac{r}{2}$.
It is a simple but tedious exercise to show that the intervals
for $r_1=\lceil\frac{r}{2}\rceil,\dots,r-m$
cover every $s\equiv\binom{r}{2}$
in the range from $\lfloor\frac{r-m}{2}\rfloor+\frac{m}{2}(2r-3m+1)$
to $\frac34\binom{r}{2}$. For $s>\frac34\binom{r}{2}$ it is enough
to show that one can pack $(\binom{r}{2}-s)/2\le\binom{\lfloor r/2\rfloor}{2}$
triangles into $K_r$. This also follows from the proof of \Lm{krd}
where it was shown that one can pack $\binom{\lfloor r/2\rfloor}{2}$
triangles into $K_r\setminus E(K_{\lceil r/2\rceil})$.
\end{proof}

Lemmas \ref{l:krd} and \ref{l:krd2} show that if there exists a decomposition
with $M(\Pi)=s$ then there exists a simple decomposition with $M(\Pi)=s$
except possibly in the range between about $\frac{1}{2}r^{3/2}$ and
about $r^{3/2}$. There can exist non-simple decompositions in this range
for which there is no simple decomposition. For example, the lines
of a projective plane of order $q'$, $q'$ odd, give rise to a decomposition $\Pi$ of $K_r$
when $r=q'^2+q'+1$ with $M(\Pi)=(q'^2+q'+1)(q'+1)/2$ (exactly the bound in \Lm{krd}).
One can check that for a simple decomposition to have the same
value of $M(\Pi)$ would require $\frac{q'-1}{2}<r_1<\frac{q'+1}{2}$
for large $q'$, an impossibility, so no corresponding simple decomposition exists.

\section{Realizing clique decompositions of the projective plane}

We now turn to the question of whether a simple decomposition can be realized
by a set of points in $PG(2,q)$. One needs a set $S$ formed by taking a large
number $r_1$ of points in one line, and the remaining points only on lines
intersecting $S$ in at most 3 points.
The proof of the following lemma provides a construction which
realizes this in most relevant cases.

\begin{lemma}\label{l:range}
 Fix $r$, $0\le r\le q+1$ and assume
 $r_1\ge\max\{\frac13(2r-3),(2r-3)-(q+1)\}$.
 Then any simple decomposition $\Pi$ of $K_r$ with maximal clique of
 order $r_1$ can be realized by a set of points in $PG(2,q)$.
\end{lemma}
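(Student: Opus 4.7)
The plan is to take $\ell_1$ as the line at infinity of $PG(2,q)$ and to place the $r_1$ vertices of $\Pi$'s unique big clique as a set $A\subset\ell_1$; the remaining $r_2:=r-r_1$ vertices then must be realised as a set $B\subset AG(2,q)=PG(2,q)\setminus\ell_1$. Under this identification every point of $\ell_1$ corresponds to a \emph{direction} in $AG(2,q)$, and the constraints imposed by $\Pi$ translate into affine statements: a mixed triangle $\{a,b,b'\}$ with $a\in A$ forces the line through $\phi(b),\phi(b')$ to have direction $a$; an off-off triangle $\{b,b',b''\}\subset V_2$ forces its three images to be collinear in some direction $d\in A^c:=\ell_1\setminus A$; and the edges impose analogous direction conditions together with the requirement that no further point of $S$ lies on the line. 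Note that the two hypotheses translate into $|A|\ge 2r_2-3$ (from $r_1\ge(2r-3)/3$) and $|A^c|\ge 2r_2-3$ (from $r_1\ge(2r-3)-(q+1)$).

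Stage~1 is a \emph{direction assignment}: choose $d(\tau)\in A^c$ for every off-off triangle and every off-off edge $\tau$ of $\Pi$ so that the directions assigned to off-off cliques meeting at any single $b\in V_2$ are pairwise distinct. This is necessary, since two off-off cliques sharing a vertex $b$ and a direction $d$ would accumulate four or more points of $B$ on the line through $\phi(b)$ in direction $d$. Any two off-off cliques of $\Pi$ share at most one vertex of $V_2$ (else an edge of $K_r$ would be doubly covered), so the conflict hypergraph on $V_2$ is linear; its maximum degree is at most $r_2-1$, the number of off-off cliques through any fixed $b$. A greedy / Hall-type argument exploiting the linearity then produces a valid colouring using the $|A^c|\ge 2r_2-3$ available directions.

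Stage~2 is an \emph{iterative placement}. Order the vertices of $V_2$ so that each off-off triangle has its third vertex inserted strictly after its first two. When placing $\phi(b_{k+1})$, the positive constraints --- lie on the line through $\phi(b_i)$ in direction $a$ for each mixed triangle $\{a,b_i,b_{k+1}\}$, and lie on the unique line through two already-placed $\phi(b_i),\phi(b_j)$ whenever $\{b_i,b_j,b_{k+1}\}$ is an off-off triangle --- cut out an affine variety which is generically a single point. The negative constraints --- avoid previously placed points, and avoid creating an unintended collinearity on a line of direction in $A$ or $A^c$ --- yield only $O(r_2^2)$ forbidden positions. The bound $|A|\ge 2r_2-3$ controls the number of positive constraints in directions of $A$, and together with $|A^c|\ge 2r_2-3$ ensures that the positive variety is non-empty and leaves at least one point satisfying all negative constraints.

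The main obstacle is geometric consistency between the two stages: the positive-constraint lines for $\phi(b_{k+1})$ must actually concur at a point of $AG(2,q)$ that is compatible with the Stage~1 direction assignment and that avoids every forbidden position. Managing this requires coordinating the colouring with the insertion order (for instance via a BFS-like traversal of the off-off clique hypergraph that defers off-off triangles and handles mixed triangles first), and the symmetric bounds $|A|,|A^c|\ge 2r_2-3$ built into the hypothesis are precisely what balances the two stages so that both remain simultaneously feasible.
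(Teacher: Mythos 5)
Your reduction to the affine picture is reasonable, and you have correctly decoded the two hypotheses as $|A|\ge 2r_2-3$ and $|\ell_1\setminus A|\ge 2r_2-3$ (the same two inequalities that drive the paper's argument). But Stage~2 contains a genuine gap that the last paragraph of your write-up only names without closing. If a vertex $b\in V_2$ lies in $d$ mixed triangles whose other $V_2$-vertices are already placed, then $\phi(b)$ must lie on $d$ prescribed lines simultaneously; for $d\ge 3$ these lines do not concur for a generic choice of the earlier points, so the ``positive variety'' is empty, not ``generically a single point.'' No insertion order avoids this: the mixed triangles form a graph $H$ on $V_2$ (properly edge-coloured by their apexes in $A$), and in the extreme case that the application actually requires --- every pair of $V_2$-vertices spanning a mixed triangle, i.e.\ $k=\binom{r_2}{2}$ --- $H$ is complete, so in any ordering the last vertex has back-degree $r_2-1\ge 3$. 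Even when $d=2$ the positive constraints determine a single forced point, leaving no slack at all for the negative constraints, so the claim that ``at least one point'' survives them is unsupported. A greedy placement cannot manufacture the required concurrences; some global algebraic structure must force them.

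The paper's proof supplies exactly this missing device. It puts the $r_2$ small-side points on the conic $XZ=Y^2$, chosen as a geometric progression $\phi(p_i)=\alpha^i$ in the cyclic group $\F_{q^2}^\times/\F_q^\times$ of order $q+1$, so that all $\binom{r_2}{2}$ secants meet a fixed external line $L$ in only $2r_2-3$ points $\psi(\alpha^3),\dots,\psi(\alpha^{2r_2-1})$. The big clique is then a set of $r_1$ points of $L$, and the number of triangles is tuned from $0$ to $\binom{r_2}{2}$, one step at a time, by choosing how many of those $2r_2-3$ special points it contains --- this is precisely where the conditions $r_1\ge 2r_2-3$ and $(q+1)-r_1\ge 2r_2-3$ are used. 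Without such a source of concurrency (an arc whose secants are concentrated on few points of a line), your two-stage plan does not go through.
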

\begin{proof}
Consider sets of points that are subsets of $C\cup L$, where $C=\{XZ=Y^2\}$ is
the conic used in the proof of \Th{init} and $L=\{X=dZ\}$ is a line that does
not intersect~$C$ (so $d$ is chosen to be a quadratic non-residue in the
field~$\F_q$). A simple calculation shows that the secant line joining
$[s^2\cl st\cl t^2]$ and $[s'^2\cl s't'\cl t'^2]$ on $C$ meets $L$ at the
point $[d(st'+s't)\cl dtt'+ss'\cl st'+s't]$ on~$L$. This mapping of pairs of
points on $C$ to $L$ is more easily described by introducing the norm group
$G=\F_{q^2}^\times/\F_q^\times$. The points $p=[s^2\cl st\cl t^2]\in C$
correspond to the coset $\phi(p)=(s+t\sqrt{d})\F_q^\times$ and the coset
$\alpha=(a+b\sqrt{d})\F_q^\times$ corresponds to the point
$\psi(\alpha)=[db\cl a\cl b]\in L$. The secant line through $p,p'\in C$ then
meets $L$ at $\psi(\phi(p)\phi(p'))$. The key point is that $G$ is cyclic of
order $q+1$. Hence by taking a subset $P=\{p_1,p_2,\dots,p_s\}$ of $C$ with
$2s-3\le q+1$ such that $\phi(p_i)$ form a suitable geometric progression, the secants % added details
through these points meet $L$ in only $2s-3$ points (assuming $s\ge2$).
Indeed, we can take $\phi(p_i)=\alpha^i$ where $\alpha$ is a generator of~$G$
so that the secants meet $L$ at the points
$\psi(\alpha^3),\psi(\alpha^4),\dots,\psi(\alpha^{2s-1})$.
Moreover there are 4 points ($\psi(\alpha^3),\psi(\alpha^4),\psi(\alpha^{2s-2}),\psi(\alpha^{2s-1})$)
on $L$ which meet just one secant, 4 which meet exactly 2 secants, etc., with
1 or 3 points meeting $\lfloor s/2\rfloor$ secants (depending on the parity
of~$s$).  Now let $P'=\{p'_1,\dots,p'_t\}$ be a set of $t$ points on the
line $L$ and suppose there are $k$ secants through two points of $P$ meeting $P'$.
then $P\cup P'$ induces a simple edge decomposition of $K_{P\cup P'}$ with
one clique of order $|P'|$ and $k$ triangles, the remaining cliques being %P'
single edges.

We now consider the conditions on the parameter that allow us to vary $k$
between the minimum of zero and the maximum of $\binom{s}{2}$, where $s\ge2$.
To achieve $k=0$ requires $t\le (q+1)-(2s-3)$ as $P'$ must avoid all the secant
lines through~$P$. To achieve $k=\binom{s}{2}$
requires $t\ge 2s-3$ as $P'$ must meet all secants through~$P$.
All values of $k$ between the minimum and maximum can
be achieved one step at a time by moving some point of $P'$ so that it
meets one more secant line. Now $s=r-r_1$ and $t=r_1$ so these conditions
become
\[
 r_1\le q+1-(2r-2r_1-3)\qquad\text{and}\qquad r_1\ge 2r-2r_1-3,
\]
or equivalently $r_1\ge (2r-3)-(q+1)$ and $r_1\ge\frac13(2r-3)$.
For $s<2$ there are no secant lines and the only restriction is
$t=r_1\le q+1$ which follows from $r_1\le r\le q+1$.
\end{proof}

\begin{corollary}
 There exists an absolute constant $C>0$ such that
 $w/q\le f(w)\le w/q+C(w^{3/2}/q^{5/2}+1)$ for all even $w$
 with $Cq^{3/2}\le w\le N-Cq^{3/2}$.
\end{corollary}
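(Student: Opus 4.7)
The plan splits into lower and upper bounds. For the lower bound, \Lm{cong} gives $|\cP^o(R)| \le rq+1$ for any set $R$ of $r$ lines, so taking $R$ to be a minimizer of size $f(w)$ yields $f(w) \ge (w-1)/q$; since $f(w)$ is an integer this is essentially $\lceil w/q\rceil \ge w/q$ (the $O(1/q)$ discrepancy in the case $w\equiv 1\bmod q$ is absorbed into the $+C$ on the other side).

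For the upper bound, the idea is to build, for each even $w$ in the stated range, an explicit point set $S$ realising $|\cL^o(S)| = w$ with $|S|=r$ of the required size. By \Lm{sym} I reduce to $w\le N/2$, which keeps $r\approx w/q\le q/2+o(q)$, well inside $q+1$. By the dual of \Lm{rs}, exhibiting such an $S$ with $|S|=r$ gives $f(w)\le r$; and by \Lm{cv}, the condition $|\cL^o(S)|=w$ translates to the constraint $M(\Pi)=M:=(r(q+1)-w)/2$ on the clique decomposition $\Pi$ of $K_S$ induced by the lines of $PG(2,q)$.

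I pick $r$ even, with the residue mod~$4$ forced by the congruence $w\equiv r(q+2-r)\bmod 4$ of \Lm{cong}, and large enough that $M$ exceeds the lower threshold $\lfloor(r-m)/2\rfloor+m(2r-3m+1)/2\sim r^{3/2}$ of \Lm{krd2} (where $m=\lceil\sqrt{r-3}\rceil-1$). Writing $r=w/q+\epsilon$, we get $M=(\epsilon q+r)/2$, so the threshold demands $\epsilon q\gtrsim r^{3/2}\sim (w/q)^{3/2}$, i.e., $\epsilon=O(w^{3/2}/q^{5/2}+1)$ (the extra $+1$ accommodates parity and rounding). The upper threshold $\tfrac34\binom{r}{2}\sim r^2$ is automatic since $r^{3/2}\ll r^2$ in the regime $w\ge Cq^{3/2}$. \Lm{krd2} then delivers a simple decomposition $\Pi$ of $K_r$ with $M(\Pi)=M$; because $M$ sits near the lower end of the attainable range, the large clique has $r_1\approx r-\sqrt{r}$, which for $r$ sufficiently large satisfies both $r_1\ge (2r-3)/3$ and $r_1\ge (2r-3)-(q+1)$ (the latter being mild as $r\le q$). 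Now \Lm{range} realises $\Pi$ as a concrete configuration $S\subset PG(2,q)$, giving the desired $f(w)\le r$.

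The main hurdle is orchestrating the four side-conditions simultaneously: $r$ even, $r\bmod 4$ pinned by \Lm{cong}, $M$ in the valid interval of \Lm{krd2}, and $r_1$ in the range demanded by \Lm{range}, all while keeping $r$ within $O(w^{3/2}/q^{5/2}+1)$ of $w/q$. The two-sided hypothesis $Cq^{3/2}\le w\le N-Cq^{3/2}$ is precisely what the argument needs: the lower bound on $w$ ensures $r$ is so large that the $O(1)$ parity adjustments are negligible against the $\Theta(w^{3/2}/q^{5/2})$ main error term, and the upper bound (via \Lm{sym}) keeps $r\le q+1$ so that \Lm{range} remains applicable.
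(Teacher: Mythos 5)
Your overall route is the paper's: the lower bound from \Lm{rs} and \Lm{cong}, and for the upper bound the choice of an even $r$ with $r\equiv qw\bmod 4$ just above $w/q+2w^{3/2}/q^{5/2}$, a simple decomposition with $M(\Pi)=r/2+2t$ from \Lm{krd2} whose large clique has $r_1=r-O(\sqrt r)$, realization via \Lm{range}, and the conclusion $|\cL^o(S)|=rq-4t=w$ from \Lm{cv}. However, your opening reduction ``by \Lm{sym} I reduce to $w\le N/2$'' is a genuine error: $N=q^2+q+1$ is odd, so for even $w$ the complementary size $N-w$ is odd, and \Lm{rs} and \Lm{cv} --- the entire even-size machinery your argument runs on --- do not apply to $N-w$. (Indeed the paper derives the odd case \emph{from} the even case, not the other way around, and its proof treats the full range $Cq^{3/2}\le w\le N-Cq^{3/2}$ directly.) This matters because the consequences you draw from the reduction --- that $r\le q/2+o(q)$ is ``well inside $q+1$'' and that the condition $r_1\ge(2r-3)-(q+1)$ of \Lm{range} is ``mild'' --- fail at the top of the range. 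For $w$ near $N-Cq^{3/2}$ one has $r\approx q+1-(C-2)\sqrt q$, so $(2r-3)-(q+1)=2r-q-4\approx r-(C-2)\sqrt q$; since $r-r_1=O(\sqrt r)$, the hypothesis of \Lm{range} holds only because $C$ is large. Keeping $r\le q+1$ with enough slack for this condition is precisely the role of the assumption $w\le N-Cq^{3/2}$, which you have instead attributed to the (invalid) symmetry reduction.

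A smaller point: the claimed bound is $w/q\le f(w)$ exactly, and a deficit in a lower bound cannot be ``absorbed into the $+C$ on the other side,'' since that constant sits in the upper bound. The correct fix is parity: by \Lm{rs} the minimizing $r$ is even, so $rq+1$ is odd while $w$ is even, whence $w\le rq$ and $f(w)=r\ge w/q$ with no edge case. With these two repairs your argument coincides with the paper's proof.
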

Note that for odd $w$, $N-w$ is even and so
$(N-w)/q\le f(w)=f(N-w)\le (N-w)/q+C((N-w)^{3/2}/q^{5/2}+1)$.
\begin{proof}
By choosing $C$ sufficiently large we may assume that $q$ is also large.
The lower bound follows from Lemmas \ref{l:rs} and \ref{l:cong}.
For the upper bound choose $r$ minimal such that
$r>w/q+2w^{3/2}/q^{5/2}$ and $r\equiv qw\bmod 4$.
Write $w=rq-4t$, so that $r^{3/2}\le 4t\ll r^2$ and $r>\sqrt{q}$.
By \Lm{krd2} there exists a simple decomposition of $K_r$
with $M(\Pi)=r/2+2t$ and indeed, this decomposition must
have maximal clique size $r_1=r-O(\sqrt{r})$.
Then by \Lm{range} this decomposition can be realised by a
subset $S$ of $PG(2,q)$. Now $|\cL^o(S)|=qr-4t=w$ by \Lm{cv}
and so $f(w)\le r\le w/q+C(w^{3/2}/q^{5/2}+1)$.
\end{proof}

\section{Further constructions from blocking sets
   and the maximum of $f(r)$}

We shall now provide some constructions that give
at least some reasonable bounds on $f(r)$ for $r<Cq^{3/2}$
or $r>N-Cq^{3/2}$.

Let $Q^+\subseteq\F_q$ be the set of non-zero quadratic residues and
$Q^-\subseteq\F_q$ be the set of quadratic non-residues.
Both sets have $(q-1)/2$ elements.
Define $Q_i\subseteq\cP$, $i=0,1$ by
\[
 Q_0=\{[x\cl0\cl1]:x\in Q^+\}\cup\{[1\cl x\cl 0]:x\in Q^+\}\cup\{[0\cl1\cl x]:x\in Q^-\},
\]
and
\[
 Q_1=\{[x\cl0\cl1]:x\in Q^+\}\cup\{[1\cl x\cl0]:x\in Q^+\}\cup\{[0\cl1\cl x]:x\in Q^+\}.
\]
Given any line $\ell\colon \alpha X+\beta Y+\gamma Z=0$ that does not go through
the points $O_x:=[1\cl0\cl0]$, $O_y:=[0\cl1\cl0]$, $O_z:=[0\cl0\cl1]$,
we have $|\ell\cap Q_i|\equiv i\bmod 2$.
Indeed, $\ell$ intersects $\{[x\cl0\cl1]:x\in Q^+\}$ iff $\alpha/\gamma\in Q^+$
and similarly for the others. But for any $\alpha,\beta,\gamma\ne0$
an odd number of the conditions $\alpha/\gamma\in Q^+$, $\beta/\gamma\in Q^+$,
and $\gamma/\alpha\in Q^+$ hold.

The example $Q_0$ is due to J. di Paola.
% It is also a blocking set, and
By a famous result of Blokhuis~\cite{Blok} the set
$Q_0\cup \{ O_x, O_y, O_z\}$ is the smallest nontrivial blocking set on $PG(2,q)$ when $q$ is prime.

\begin{lemma}\label{l:32}
\[
 f(\tfrac{3}{2}(q-1)+kq+j)\le 3q+j(q+2-j)
\]
for $0\le k\le(q-1)/2$ and $0\le j\le q+1$.
\end{lemma}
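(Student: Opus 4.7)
The plan is to construct, for each admissible $(k,j)$, a point set $S$ with $|S|=\tfrac32(q-1)+kq+j$ and $|\cL^o(S)|\le 3q+j(q+2-j)$, built in three layers: a base set $Q$ of size $\tfrac32(q-1)$, a $kq$-point addition from concurrent lines, and a $j$-arc on a conic. For the base I take $Q=Q_0$ if $q\equiv 1\bmod 4$ and $Q=Q_1$ if $q\equiv 3\bmod 4$, so that (by the product-of-three-residues identity already observed above) every line with $\alpha\beta\gamma\ne 0$ meets $Q$ in an even number of points; a short case analysis of the three coordinate axes and the $3(q-1)$ non-axis lines through $O_x,O_y,O_z$ then gives $|\cL^o(Q)|\le 3q$, with every odd line passing through at least one coordinate point. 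The $j$-term will come from a $j$-arc $C_j$ of a conic (disjoint from the other ingredients, passing to a different conic when necessary), contributing $j(q+2-j)$ odd lines by \Th{init}.

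First, for $k$ even I would take $k$ concurrent non-axis lines $\ell_1,\dots,\ell_k$ through $O_x$ from the $(q-1)/2$ such lines with $|\ell_i\cap Q|=0$ (possible since $k\le(q-1)/2$), and set $S=Q+T+C_j$ with $T=\sum_{i=1}^k\ell_i$. One checks $|T|=kq$, $T\cap Q=\emptyset$, and $\cL^o(T)=\{\ell_1,\dots,\ell_k\}$, so that $|\cL^o(S)|\le\tfrac{3(q-1)}{2}+k+j(q+2-j)\le 3q+j(q+2-j)$.

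The main obstacle is the $k$ odd case, where the straightforward concurrent-lines construction forces $O_x\in T$ and makes $|\cL^o(T)|$ blow up to order $q^2$. Here I plan to replace one ``line unit'' by the conic-minus-a-point $C\setminus\{O_x\}$ on $C\colon XZ=Y^2$: take $T'=(C\setminus\{O_x\})+\sum_{i=1}^{k-1}\ell_i$, where $\ell_1,\dots,\ell_{k-1}$ are $k-1$ (now even) concurrent \emph{external} lines to $C$ through $O_y$ satisfying $|\ell_i\cap Q|=0$ (these form $(q-1)/2$ lines, more than enough). Using \Th{init} for $C\setminus\{O_x\}$ and parametrizing the $q$ secants of $C$ through $O_x$ by points $[v^2\cl v\cl 1]\in C$, one verifies that exactly $(q-1)/2$ of those secants lie in $\cL^o(Q)$, while the $q$ tangents of $C$ and the $\ell_i$'s contribute disjointly. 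This yields $|\cL^o(Q+T')|=\tfrac{5q+2k-3}{2}\le 3q$ whenever $k\le(q-1)/2$, and adding a disjoint $C_j$ on a separate conic then completes the proof.
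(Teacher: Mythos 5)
Your overall blueprint --- a quadratic-residue base set of size $\tfrac32(q-1)$ with controlled parity on generic lines, a pencil of $k$ concurrent lines contributing $kq$ points, and a $j$-point conic arc contributing $j(q+2-j)$ odd lines --- is the same as the paper's. But there is a genuine gap in how you combine the pencil with the arc, namely the disjointness you wave at with ``disjoint from the other ingredients, passing to a different conic when necessary.'' In your even-$k$ construction the lines $\ell_i$ pass through $O_x$, which \emph{lies on} the conic $XZ=Y^2$; each non-axis line through $O_x$ is a secant and deposits one further conic point into $T$. Hence a $j$-arc $C_j$ on that conic must avoid $k$ of its $q+1$ points, which is impossible once $j+k>q+1$ (already for $k=2$, $j=q$), and moving to a different conic does not repair this: on any other conic the $k$ lines still exclude up to $2k$ points, so you would need to prove that your specific set of $(q-1)/2$ lines through $O_x$ avoiding $Q$ coincides with the external lines of some auxiliary conic essentially disjoint from $Q$ --- a claim you neither state nor establish. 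The same problem recurs in your odd-$k$ construction, where $C_j$ must live on a second conic that the lines $\ell_i$ are not shown to miss. The paper avoids all of this at a stroke by taking the $k$ concurrent lines through $O_y$ of the form $X=\alpha Z$ with $\alpha$ a non-residue: these are \emph{external} to $XZ=Y^2$, so $V$, $Q_i$ and the arc are automatically pairwise disjoint for every $j\le q+1$.

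A second, less serious point: your odd-$k$ case is far more elaborate than needed. Every generic line meets the union of $k$ concurrent lines (minus the common point) in exactly $k$ points, so the parity obstruction for odd $k$ is resolved simply by pairing the pencil with the \emph{odd}-type residue set (the other of $Q_0,Q_1$), making $k+k$ even on every generic line; this is exactly what the paper's choice $Q_{k\bmod 2}$ accomplishes, and it renders the conic-minus-a-point device and the secant/tangent cancellation count $\tfrac{5q+2k-3}{2}$ unnecessary. (That count itself looks arithmetically consistent, but it inherits the disjointness gap above, so as written the proposal does not prove the lemma for the full stated range of $k$ and $j$.)
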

\begin{proof}
Let $V$ be the set of $kq$ points that lie in one of $k$ ``vertical'' lines of
the form $X=\alpha Z$, $\alpha\in Q^-$, not including the point $O_y$ at infinity.
Let $C$ be any set of $j$ points on the conic $XZ=Y^2$. Note that $V$, $Q_i$, and $C$
are pairwise disjoint for $i=0,1$. Let $S=V\cup Q_{k\bmod 2}\cup C$ so that
$|S|=\tfrac{3}{2}(q-1)+kq+j$. Consider a line $\ell$ that does
not meet $\{O_x,O_y,O_z\}$. Then $|\ell\cap V|=k$ and
$|\ell\cap Q_{k\bmod2}|\equiv k\bmod2$. Thus
$|\ell\cap S|\equiv |\ell\cap C|\bmod 2$. From the proof of \Th{init}
there are %%% at most
 $j(q+2-j)$ lines that meet $C$ in an odd number of points,
and there are only $3q$ lines that meet $\{O_x,O_y,O_z\}$, so
$f(|S|)\le |\cL^o(S)|\le 3q+j(q+2-j)$ as required.
\end{proof}

\begin{lemma}\label{l:e}
\[
 f(kq+j)\le k+j(q+2-j)
\]
for $0\le k\le(q-1)/2$, $k$ even, and $0\le j\le q+1$.
\end{lemma}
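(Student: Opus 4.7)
My plan is to construct a set $S \subseteq \cP$ with $|S|=kq+j$ and $|\cL^o(S)| \le k + j(q+2-j)$; the bound then follows from \eqref{f2}. The construction superposes two standard ingredients: a pencil of $k$ lines through a common point $P$ with $P$ removed, which contributes exactly the $k$ pencil lines as odd lines when $k$ is even, and a $j$-subset of a conic, which contributes $j(q+2-j)$ odd lines as in the proof of \Th{init}. The hypotheses ``$k$ even'' and ``$k \le (q-1)/2$'' turn out to be exactly what is needed to make these two contributions additive.

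Concretely, fix a conic $C$ and an external point $P$ to $C$. A standard count of tangent, secant, and external lines through an external point shows that exactly $(q-1)/2$ lines through $P$ miss $C$ entirely, so since $k \le (q-1)/2$ I may select $k$ such external lines $\ell_1,\dots,\ell_k$. Set $S_0 = (\ell_1 \cup \cdots \cup \ell_k) \setminus \{P\}$ and pick any $j$-subset $C' \subseteq C$. Since each $\ell_i$ is external to $C$, $S_0 \cap C' = \emptyset$ and $|S_0|=kq$, so $S := S_0 \cup C'$ has exactly $kq+j$ points.

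To count $|\cL^o(S)|$ I classify lines by how they meet $S_0$. Each $\ell_i$ meets $S$ in $q$ points (the $q$ points of $\ell_i \setminus \{P\}$), hence is odd. Every other line through $P$ misses $S_0$ completely, so its parity with $S$ equals the parity of $|\ell \cap C'|$. Every line not through $P$ meets each $\ell_i$ in exactly one point (distinct from $P$), so meets $S_0$ in $k$ points, which is even, and again the parity with $S$ equals that of $|\ell \cap C'|$. Consequently the odd lines of $S$ consist of the $k$ chosen lines together with all other lines meeting $C'$ in an odd number of points. The total number of lines meeting $C'$ in an odd number of points is $j(q+2-j)$ by the proof of \Th{init}, and none of the $\ell_i$ lies in this family since each $\ell_i$ is external to $C$ and hence misses $C'$. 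The two families are therefore disjoint, giving $|\cL^o(S)| = k + j(q+2-j)$. There is no real obstacle; the only delicate point is verifying that the two sources of odd lines do not overlap, which is guaranteed precisely by the externality of the $\ell_i$ to $C$ and by $k$ being even.
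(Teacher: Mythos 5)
Your proof is correct and is essentially the paper's own construction: the paper takes $V$ to be $k$ lines $X=\alpha Z$, $\alpha$ a non-residue, minus their common point $O_y=[0\cl1\cl0]$ — i.e.\ exactly $k$ external lines through an external point of the conic $XZ=Y^2$ with the vertex deleted — together with a $j$-subset of that conic, and counts odd lines the same way. Your version merely phrases the same construction synthetically rather than in coordinates.
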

\begin{proof}
Let $V$ and $C$ be as in the proof of Lemma~\ref{l:32}. Then
the number of lines meeting $C$ in an odd number of points is
$j(q+2-j)$ while the number of lines meeting $V$ in an odd
number of points is just $k$ (the lines of $V$). As
$|V\cup C|=kq+j$, $f(kq+j)\le k+j(q+2-j)$.
\end{proof}

\begin{lemma}\label{l:o}
\[
 f(q+1+kq+j)\le q+1+k+j(q+2-j)
\]
for $0\le k\le(q-1)/2$, $k$ even, and $0\le j\le q-1$,
\end{lemma}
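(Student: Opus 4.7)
The plan is to extend the construction used for \Lm{e} by adjoining the full conic $\mathcal{C}_1\colon XZ=Y^2$, which contributes exactly $q+1$ extra points and $q+1$ extra odd lines (its tangents). These account for the new ``$q+1$'' terms in both $|S|$ and the claimed bound on $|\cL^o(S)|$. Concretely, I would take $V$ to be the same $kq$-point set from the proof of \Lm{e} (the union of $k$ vertical lines $X=\alpha Z$, $\alpha\in Q^-$, with $O_y$ removed), take $\mathcal{C}_1$ to be the full conic $XZ=Y^2$, and take $C$ to be any $j$-point subset of a \emph{second} conic $\mathcal{C}_2\colon XZ=dY^2$ with $d\in Q^+\setminus\{1\}$; then set $S=V\cup\mathcal{C}_1\cup C$.

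The critical observation is that any two conics of the form $XZ=cY^2$ with distinct nonzero $c$ meet only in $\{O_x,O_z\}$: the two defining equations give $(1-d)Y^2=0$, forcing $Y=0$ and hence $XZ=0$. So $\mathcal{C}_2\setminus\{O_x,O_z\}$ is a $(q-1)$-point set disjoint from $\mathcal{C}_1$, which is exactly where the hypothesis $j\le q-1$ enters; any $j$-subset of it will do for~$C$. Moreover, choosing $d\in Q^+$ ensures, by the same quadratic-residue argument already used for $\mathcal{C}_1$, that the lines $X=\alpha Z$ with $\alpha\in Q^-$ are external to $\mathcal{C}_2$ as well, so $V\cap C=\emptyset$ too. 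Hence $V,\mathcal{C}_1,C$ are pairwise disjoint and $|S|=kq+(q+1)+j$ as required.

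The odd-line bound then follows immediately from the $\F_2$-linearity of $\cL^o$ together with the triangle inequality for symmetric differences:
\[
 |\cL^o(S)|\le |\cL^o(V)|+|\cL^o(\mathcal{C}_1)|+|\cL^o(C)| = k+(q+1)+j(q+2-j),
\]
where $|\cL^o(V)|=k$ comes from the proof of \Lm{e}, $|\cL^o(\mathcal{C}_1)|=q+1$ is the tangent count for the conic, and $|\cL^o(C)|=j(q+2-j)$ comes from the arc computation in the proof of \Th{init} applied to $\mathcal{C}_2$. Crucially, because we want only an \emph{upper} bound, no analysis of the intersections among the three $\cL^o(\cdot)$ sets is required, which keeps the proof clean. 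The only real work is the point-set disjointness, and I expect the only real obstacle to be a minor case analysis for very small $q$: when $q=3$ the set $Q^+\setminus\{1\}$ is empty, but in that regime $(q-1)/2<2$ forces $k=0$, hence $V=\emptyset$, and one can simply take $d\in Q^-$ instead without breaking any disjointness.
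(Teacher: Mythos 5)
Your construction is essentially the paper's own: the paper also takes $S=V\cup C\cup C'$ with $V$ the vertical-line set from \Lm{32}, one full conic, and a $j$-point subset of a second conic of the form $XZ=cY^2$ meeting the first only in $\{O_x,O_z\}$ (which the $j$-subset is required to avoid, using $j\le q-1$), and then adds the three odd-line counts $k$, $q+1$, and $j(q+2-j)$ by linearity of $\cL^o$. The only difference is cosmetic --- the paper fixes the second conic as $XZ=4Y^2$ and puts the $j$-subset on $XZ=Y^2$, whereas you choose a general $d\in Q^+\setminus\{1\}$; your choice is in fact slightly more careful, since $4=1$ in characteristic $3$.
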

\begin{proof}
Let $V$ and $C$ be as in the proof of Lemma~\ref{l:32} except that we
shall now insist that $O_x,O_z\notin C$. Let $C'$ be the conic
$XZ=4Y^2$. Note that $C'$ could only meet $C$ at the points $O_x,O_z$,
which we have assumed do not lie in $C$. Also $C'\cap V=\emptyset$.
There are $q+1$ lines that meet $C'$ in an odd number of points,
$j(q+2-j)$ lines that meet $C$ in an odd number of points, and
$k$ lines that meet $V$ in an odd number of points. The result
follows since $|V\cup C\cup C'|=q+1+kq+j$.
\end{proof}

\begin{corollary}
 For large $q$,
 the maximum value of $f(r)$ is $(q^2+4q+3)/4$ and occurs only
 at $r=(q+1)/2$, $r=(q+3)/2$, $r=N-(q+1)/2$, and $r=N-(q+3)/2$.
\end{corollary}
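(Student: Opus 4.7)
The plan is to first establish the four maximizers and then verify uniqueness. By \Th{init}, $f(r) = r(q+2-r)$ for $0 \le r \le q+1$; since $q$ is odd this concave discrete function attains its integer maximum $(q+1)(q+3)/4 = (q^2+4q+3)/4$ precisely at $r = (q+1)/2$ and $r = (q+3)/2$, every other integer in the range giving at most $(q-1)(q+5)/4 = (q^2+4q+3)/4 - 2$. Together with \Lm{sym}, this identifies the same value $(q^2+4q+3)/4$ at the two symmetric points $r = N - (q+1)/2$ and $r = N - (q+3)/2$. The rest of the plan is to prove $f(r) < (q^2+4q+3)/4$ for $q+2 \le r \le N-q-2$, and by a further application of \Lm{sym} it suffices to treat $q+2 \le r \le N/2$.

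For the bulk subrange $Cq^{3/2} \le r \le N/2$, the preceding Corollary gives $f(r) \le r/q + C(r^{3/2}/q^{5/2} + 1)$; since $r/q \le q$ and $r^{3/2}/q^{5/2} \le q^{1/2}$ in this range, the bound is $O(q)$, so strictly less than $(q^2+4q+3)/4$ for $q$ large. The endpoint $r = q+2$ is handled directly by \Th{f(q+2)}, which gives $f(q+2) \le 2q-2$. For the remaining small subrange $q+3 \le r < Cq^{3/2}$, I would write $r = aq + b$ with $0 \le b < q$ (so $a \le C\sqrt{q}$) and set $l = \min(|b - (q+1)/2|, |b - (q+3)/2|)$ and $l' = \min(|b - (q+3)/2|, |b - (q+5)/2|)$. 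The algebraic identities $b(q+2-b) = (q^2+4q+3)/4 - l(l+1)$ and $(b-1)(q+3-b) = (q^2+4q+3)/4 - l'(l'+1)$ show that \Lm{e} (when $a$ is even) gives $f(r) < (q^2+4q+3)/4$ whenever $l(l+1) > a$, and \Lm{o} (when $a$ is odd) gives it whenever $l'(l'+1) > q + a$. The exceptional cases, in which $b$ lies within $O(\sqrt{q})$ of $(q+1)/2$ or $(q+3)/2$, I would cover by \Lm{32}: writing $r = (3q-3)/2 + kq + j$ with $k = a-1$, $j = b - (q-3)/2$ (valid when $b \ge (q-3)/2$) or $k = a-2$, $j = b + (q+3)/2$ (when $b \le (q-1)/2$) forces $j = O(\sqrt{q})$ or $q+1-j = O(\sqrt{q})$, so $f(r) \le 3q + j(q+2-j) = 3q + O(q^{3/2})$, still strictly below $(q^2+4q+3)/4$ for $q$ large. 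The handful of $r < (3q-3)/2$ not covered directly by \Lm{32} are then dispatched by \Th{lip}, starting from the \Lm{32} anchor at $r = (3q-3)/2$ where $f \le 3q$; the distance to cover in these cases is again $O(\sqrt{q})$.

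The main obstacle will be the case analysis in the small subrange. The most delicate subcases are $b \in \{(q-1)/2, (q+1)/2, (q+3)/2, (q+5)/2\}$ with $a$ small, where the \Lm{e}/\Lm{o} bounds attain equality with $(q^2+4q+3)/4$ and one must rely on the strict inequality from the alternative \Lm{32} representation. A secondary technicality is to verify the parameter-range constraints of all three lemmas ($k \le (q-1)/2$, $0 \le j \le q+1$, and the parity restrictions in \Lm{e} and \Lm{o}) throughout the argument, which is arranged by the bound $a \le C\sqrt{q}$ together with $q$ being large enough to accommodate the fixed constant $C$.
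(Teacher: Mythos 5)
Your proposal is correct in substance and reaches the same conclusion, but it organizes the argument differently from the paper. The paper never invokes the Section~5 corollary: it covers the entire range $q+2\le r\le N/2$ using only \Lm{32}, \Lm{e}, \Lm{o} (plus \Th{lip} near $r=\tfrac32(q-1)$), by a trichotomy on the residue of $r$ modulo $q$ — if $|r/q-t|\ge\tfrac14$ for all integers $t$ it uses \Lm{32}, and otherwise it uses \Lm{e} or \Lm{o} according to the parity of $\lfloor(r-1)/q\rfloor$; in every case the relevant offset $j$ lands at distance at least about $q/4$ from $(q+1)/2$, so $j(q+2-j)\le\tfrac3{16}q^2+O(q)<\tfrac14(q^2+4q+3)$. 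This works for all $k$ up to $(q-1)/2$, i.e.\ for $r$ all the way to $N/2$, which is why no separate ``bulk'' case is needed. You instead reserve the three lemmas for $r<Cq^{3/2}$ — where the additive terms $k$ and $q+1+k$ are small enough that the exceptional windows around $b\approx(q+1)/2$ shrink to width $O(\sqrt q)$ and \Lm{32} patches them with an $O(q^{3/2})$ bound — and dispatch $Cq^{3/2}\le r\le N/2$ wholesale via the corollary's $f(r)=O(q)$ bound. Your route buys a much stronger estimate in the bulk (at the cost of importing the clique-decomposition machinery of Sections~4--5, which the paper's proof avoids), while the paper's trichotomy is more uniform and self-contained. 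The boundary technicalities you flag are real but benign: for instance $r=aq$ with $a$ odd falls outside the stated parameter range of \Lm{o} under your normalization $0\le b<q$, but is recovered by writing $r=(a-1)q+q$ and applying \Lm{e} with $j=q$; similar adjustments handle the other edge cases, exactly as in the paper's own (also slightly informal) range bookkeeping.
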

\begin{proof}
The result follows when $r$ is restricted to the range
$0\le r\le q+1$ and $N-(q+1)\le r\le N$ by \Th{init}
and \Lm{sym}, so it is enough by \Lm{sym} to bound
$f(r)$ in the range $r\in[q+2,N/2]$.
For $r\in[q+2,(\frac32-\varepsilon)q]$ we can apply Lemma~\ref{l:o}
with $k=0$ to obtain $f(r)\le (\frac14-\varepsilon^2)q^2+O(q)$.
For $r\in[(\frac32-\varepsilon)q,\frac32(q-1)]$
we can apply Lemma~\ref{l:32} with $k=j=0$ and Theorem~\ref{t:lip}
to obtain $f(r)\le 3q+(q-1)\varepsilon q$. Thus we may assume
$r\ge\frac32(q-1)$.

If $|r/q-t|\ge\frac{1}{4}$
for every integer $t$, then we write $r=\frac{3}{2}(q-1)+kq+j$,
where either $0\le j\le\frac{3}{2}+\frac{q}{4}$
or $\frac{3}{2}+\frac{3q}{4}\le j<q$. In either case
Lemma~\ref{l:32} implies
\[
 f(r)\le 3q+\tfrac{q+5}{4}\cdot\tfrac{3q+3}{4}=\tfrac{1}{16}(3q^2+66q+15).
\]
If $|r/q-t|<\frac{1}{4}$ and $\lfloor (r-1)/q\rfloor$ is even, we
write $r=kq+j$ with $1\le j<\frac{q}{4}$ or
$\frac{3q}{4}<j\le q$. In either case Lemma~\ref{l:e} gives
\[
 f(r)\le k+\tfrac{3q+1}{4}\cdot\tfrac{q+7}{4}\le \tfrac{1}{16}(3q^2+30q-1).
\]
Finally, if $|r/q-t|<\frac{1}{4}$ and $\lfloor (r-1)/q\rfloor$ is odd,
we write $r=q+1+kq+j$ with $0\le j<\frac{q}{4}-1$ or
$\frac{3q}{4}-1<j\le q$. In either case Lemma~\ref{l:o} gives
\[
 f(r)\le q+1+k+\tfrac{3q-3}{4}\cdot\tfrac{q+11}{4}\le \tfrac{1}{16}(3q^2+38q+24).
\]
Thus in all cases
\[
 f(r)\le \tfrac{1}{16}(3q^2+66q+15)<\tfrac{1}{4}(q^2+4q+3).
\]
for $q$ sufficiently large.
\end{proof}

\section{Exact values from the Baer subplane}

A subset of points $S\subseteq\cP$ is % generating
 a {\em subplane of order} $k$
if $|S|=k^2+k+1$ and the sets $\{\ell\cap S:\ell\in\cL,\,|\ell\cap S|>1\}$ form
the line system of a finite projective plane of order~$k$.
In the case when $k=\sqrt{q}$, we call $S$ a {\em Baer subplane}. It is well known that
such Baer subplanes exists whenever $q$ is a perfect square (see Bruck~\cite{Bruck}).
Even more (see, e.g., Yff~\cite{Yff}) $\cP$ can be partitioned into
$q-\sqrt{q}+1$ Baer subplanes.

Consider a Baer sublane $B$ and let $R_B\subseteq\cL$ be the set of lines meeting
it in exactly $\sqrt{q}+1$ points. Then $|R_B|=q+\sqrt{q}+1$.
The lines of $R_B$ cover every point of $B$ exactly $\sqrt{q}+1$ times, and
every other point exactly once. Thus $\cP^o(R_B)=\cP\setminus B$, which is very
large. However, consider an arbitrary point $p\notin B$ and
let $R$ be the symmetric difference of $R_B$ and $\cL(\{p\})$
(these two families contain only one common line $\ell_p\in R_B$ through~$p$).
Then $\cP^o(R)=B\cup\{p\}$.  We obtain
\begin{equation}\label{eq:7}
 f(2q+\sqrt{q})\le q+\sqrt{q}+2.
\end{equation}
Considering $p\in B$ and the set of even lines of $B\setminus\{p\}$
(it is again the symmetric difference of $R_B$ and $\cL(\{p\})$, now they
have $\sqrt{q}+1$ common lines) we obtain
\begin{equation}\label{eq:8}
 f(2q-\sqrt{q})\le q+\sqrt{q}.
\end{equation}
Considering two disjoint Baer subplanes we get
\begin{equation}\label{eq:9}
 f(2q+2\sqrt{q}+2)\le 2q+2\sqrt{q}+2.
\end{equation}

\begin{theorem}\label{t:e78}
 Equality holds in \eqref{eq:7} and \eqref{eq:8} for $q\ge81$.
\end{theorem}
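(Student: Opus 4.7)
The constructions preceding \eqref{eq:7} and \eqref{eq:8} establish the upper bounds, so we need only the matching lower bounds $f(2q+\sqrt{q}) \ge q+\sqrt{q}+2$ and $f(2q-\sqrt{q}) \ge q+\sqrt{q}$. We sketch the $-$ case in detail; the $+$ case is analogous and reduces to it via a parity argument (since $q+\sqrt{q}+1$ is odd while $|S|$ below is forced to be even, any bound $|S| \ge q+\sqrt{q}+1$ is automatically promoted to $|S| \ge q+\sqrt{q}+2$).

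Arguing by contradiction, suppose $R$ is a set of $r := 2q-\sqrt{q}$ lines with $|\cP^o(R)| \le q+\sqrt{q}-2$, and set $S := \cP^o(R)$. Since $r$ is odd, Lemma~\ref{le:1} forces $|S|$ to be even and $R = \cL^e(S)$, so $|\cL^e(S)| = 2q-\sqrt{q}$ and $|\cL^o(S)| = q^2-q+\sqrt{q}+1$. The dual form of \Lm{cong} gives $|\cL^o(S)| \le |S|q+1$, hence $|S| \ge q-1$, confining $|S|$ to a narrow window. If $S$ contains a full line $\ell_0$, write $S = \ell_0 \sqcup S'$ with $|S'| \le \sqrt{q}-3$; a parity count gives $|\cL^o(S')| = 2q-\sqrt{q}-1$, but the dual of \Lm{cong} applied to $S'$ gives $|\cL^o(S')| \ge |S'|(q+2-|S'|) \ge 3(q-1)$ as soon as $|S'| \ge 3$, a contradiction, while the cases $|S'| \in \{0,1,2\}$ are dispatched by direct computation of $|\cL^o(S')|$ from small point configurations.

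In the remaining case $S$ contains no line. Since $|S| < q+\sqrt{q}+1$, Bruen's bound on blocking sets in $PG(2,q)$ (for $q$ a square) implies $S$ is not a blocking set; Lemma~\ref{l:second} further rules out the subcase where exactly one line misses $S$, since then $S$ would block the resulting affine plane and satisfy $|S| \ge 2q-1$. The key step is to show that all missed lines must be concurrent at a common point $p_0$; this follows by combining the second-moment identities $\sum_p d(p) = n_0(q+1)$ and $\sum_p d(p)^2 = n_0(n_0+q)$ (where $d(p) := |\{\ell \in \cL^0(S) : p \in \ell\}|$) with a case analysis on the triangle configurations that would arise from non-concurrent missed lines. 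Once concurrency is established, $B' := S \cup \{p_0\}$ is a blocking set of size at most $q+\sqrt{q}-1 < q+\sqrt{q}+1$, so by Bruen's theorem it must contain a line $\ell \ni p_0$. Writing $S = (\ell \setminus \{p_0\}) \cup S''$ with $|S''| \le \sqrt{q}-2$ and $a := |\cL^e(S'') \cap \cL(\{p_0\})|$, the parity analysis yields the identity $|\cL^o(S'')| + 2a = 3q - \sqrt{q} + 2$, which combined with the dual of \Lm{cong} on $|\cL^o(S'')|$ and $0 \le a \le q+1$ is infeasible for every admissible $|S''|$.

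The hardest step is the concurrency argument, and this is where the hypothesis $q \ge 81$ (equivalently $\sqrt{q}\ge 9$) enters: the various second-moment and triangle-configuration estimates used to rule out non-concurrent missed lines, together with the closing Diophantine check $|\cL^o(S'')| + 2a = 3q - \sqrt{q} + 2$ with $|\cL^o(S'')| \in \{3q-3,\,3q+1\}$ for $|S''| = 3$, become strictly contradictory only once $\sqrt{q} \ge 9$. For smaller $q$ one finds borderline admissible configurations that would require additional ad hoc arguments to eliminate, which is why the theorem is stated only for $q \ge 81$.
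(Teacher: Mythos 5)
Your overall strategy (blocking sets, Bruen's bound, parity counting on a ``line plus a few points'' configuration) is in the right territory, but there are two genuine gaps. First, the reduction of \eqref{eq:7} to \eqref{eq:8} ``by parity'' does not work. For \eqref{eq:7} one must rule out $|S|\le q+\sqrt q$, and then the blocking set obtained by adjoining one or two points to $S$ has size up to $q+\sqrt q+2$, which is \emph{not} below Bruen's threshold $q+\sqrt q+1$, so you cannot conclude that it contains a line. The paper must invoke the Bruen--Thas refinement (\Lm{Bruen}): a nontrivial blocking set of size $q+\sqrt q+2$ (with $q$ square, $q\ge 9$) is a Baer subplane plus a point; one then writes $S=(B\setminus T_1)\cup T_2$, bounds $t_1+t_2$ via \Lm{diff}~(b), and uses the parity of $t_1+t_2$ to force $S$ to be exactly one of the constructed examples. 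This structural step is the heart of the $+$ case and is absent from your sketch.

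Second, the claimed concurrency of all missed lines is unsubstantiated: $\cU$ may contain on the order of $2q$ lines, and the two incidence identities you cite do not force them through a common point; the ``case analysis on triangle configurations'' is exactly the missing argument. The paper avoids proving concurrency altogether. It splits on whether $\cU$ can be blocked by a set $V$ of at most \emph{two} points (your argument covers only the one-point case): if yes, $S\cup V$ is a blocking set and the line/Baer analysis applies via \Lm{diff}; if no, it builds a near-blocker $C$ of $\cU$ with $|C|\le|\cU|/3+(q+1)/6$ (\Lm{near}~(b)), so that $S\cup C$ blocks the affine plane obtained by deleting the one missed line, whence $|S\cup C|\ge 2q-1$ by \Lm{second}. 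Comparing with $|S|\le q+\sqrt q$ and $|\cU|\le 2q+\sqrt q$ gives the contradiction, and it is \emph{this} inequality that fails only for $q\ge 81$ --- not your closing Diophantine check. That check is itself doubtful: the identity $|\cL^o(S'')|+2a=3q-\sqrt q+2$ is asserted without derivation and, as stated, is satisfiable for $|S''|=2$ (where $|\cL^o(S'')|=2q$ gives the legal value $a=(q-\sqrt q+2)/2$), so even the concurrent subcase is not closed as written.
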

We also {\bf conjecture} that equality holds in \eqref{eq:9}, too
(at least for large enough~$q$). For the proof of \Th{e78} we need the following
classical results and a few lemmata.

\begin{lemma}\label{l:Bruen}
 {\rm (Bruen~\cite{Bruen}, sharpening by Bruen and Thas~\cite{BruenThas})}\\
 Suppose that $S\subseteq\cP$ is a nontrivial blocking set (i.e., it meets every
 line but does not contain any) then $|S|\ge q+\sqrt{q}+1$.
 Moreover, if\/ $|S|=q+\sqrt{q}+2$, and $q\ge 9$ is of square order, then
 there exists a point $x\in S$ such that $S\setminus\{x\}$ is the point set
 of a Baer subplane.\qed
\end{lemma}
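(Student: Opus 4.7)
The plan is to prove both lower bounds by contradiction, using Lemma~\ref{le:1}, the Brouwer--Schrijver/Jamison affine blocking-set bound (Lemma~\ref{l:second}), and Bruen's theorem together with the Bruen--Thas sharpening (Lemma~\ref{l:Bruen}).

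Because $q$ is an odd square, $\sqrt{q}$ is odd, so $|R|=2q\pm\sqrt{q}$ is odd. Let $S:=\cP^o(R)$ for an extremal~$R$. Lemma~\ref{le:1} then forces $R=\cL^e(S)$: every line disjoint from~$S$ (a \emph{0-line}) lies in~$R$, and every line outside~$R$ meets~$S$ in a nonzero odd number of points.

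The structural heart of the proof is the claim that the 0-lines of~$S$ form at most two pencils (through points~$q^*$ and possibly~$q^{**}$). I would prove this by fixing any 0-line~$\ell_1$ and passing to $AG:=PG(2,q)\setminus\ell_1$. The lines of $AG$ not blocked by~$S$ are precisely the restrictions of the other 0-lines; adjoining a minimum piercing set $X\subseteq AG$ to~$S$ yields a blocking set of~$AG$, so by Lemma~\ref{l:second},
\[
 |X|\ge 2q-1-|S|.
\]
Since 0-lines through a common point of~$\ell_1$ become pairwise parallel in~$AG$, a direct analysis shows that the minimum piercing equals the maximum size $g_*$ of such a parallel class; hence some point $q^*\in\ell_1$ is incident to at least $g_*+1\ge 2q-|S|$ of the 0-lines. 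Repeating the argument with each 0-line playing the role of~$\ell_1$, the set of apex points has at most two elements: three disjoint ``big'' pencils of size $\ge q-\sqrt{q}$ would contribute at least $3(q-\sqrt{q})-3$ 0-lines, exceeding $|R|\le 2q+\sqrt{q}$ as soon as $q\ge 81$.

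With this claim in hand, \eqref{eq:8} follows quickly. Assume $|S|\le q+\sqrt{q}-2$. The 0-lines lie in the pencil(s) through~$q^*$ (or $q^*,q^{**}$), so $S\cup\{q^*\}$ or $S\cup\{q^*,q^{**}\}$ meets every line of $PG(2,q)$ and is a blocking set of size at most $q+\sqrt{q}$, contradicting Bruen's bound of $q+\sqrt{q}+1$ in Lemma~\ref{l:Bruen}.

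For \eqref{eq:7}, assume $|S|\le q+\sqrt{q}$ and obtain similarly a blocking set $B':=S\cup\{q^*\}$ or $S\cup\{q^*,q^{**}\}$ of size in $\{q+\sqrt{q}+1,q+\sqrt{q}+2\}$. By Bruen--Thas, $B'$ either is a Baer subplane or differs from one by a single point. A case analysis using the intersection spectrum of a Baer subplane ($1$ or $\sqrt{q}+1$ points per line) computes $|\cL^e(S)|$ in each possibility; the nearest attainable values are $2q-\sqrt{q}$ (when $S$ is a Baer subplane with one of its own points removed, mirroring the construction of~\eqref{eq:8}) and a short list of values of the form $2q-3\sqrt{q}+2a$ with small $a$ (when $S$ arises by deleting two Baer points and adjoining one external point), none of which equals $2q+\sqrt{q}$ when $q\ge 81$. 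This contradicts $|\cL^e(S)|=|R|=2q+\sqrt{q}$, proving~\eqref{eq:7}.

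The main obstacle is the structural claim confining the 0-lines to at most two pencils: the piercing-versus-parallel-classes analysis is standard, but obtaining a clean global bound by iterating over all 0-lines requires care. The hypothesis $q\ge 81$ is used precisely to make Bruen--Thas applicable ($\sqrt{q}\ge 9$) and to ensure $3(q-\sqrt{q})-3>2q+\sqrt{q}$, which rules out three or more pencils.
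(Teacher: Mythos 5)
Your proposal does not address the statement you were asked to prove. The statement is \Lm{Bruen} itself --- the classical Bruen bound $|S|\ge q+\sqrt{q}+1$ for nontrivial blocking sets, together with the Bruen--Thas characterization at size $q+\sqrt{q}+2$. What you have written instead is a proof sketch of \Th{e78} (equality in \eqref{eq:7} and \eqref{eq:8}), and your very first sentence lists ``Bruen's theorem together with the Bruen--Thas sharpening (\Lm{Bruen})'' among the tools you intend to use. As a proof of \Lm{Bruen} this is circular: you assume the conclusion as an ingredient. Nothing in your argument establishes the lower bound $q+\sqrt{q}+1$ for nontrivial blocking sets, nor the structure of those of size $q+\sqrt{q}+2$; a genuine proof would need, for instance, the standard tangent-line counting argument of Bruen (every point of a minimal blocking set lies on a tangent, and a double count of tangents against the Cauchy--Schwarz inequality yields $|S|\ge q+\sqrt{q}+1$), none of which appears here.

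For what it is worth, the paper does not prove \Lm{Bruen} either: it is quoted as a known result with references to Bruen and to Bruen--Thas, and the $\qedsymbol$ in the statement signals that no proof is supplied. So the correct response to this assignment was either to reproduce (or cite) the classical argument, or to note that the result is imported from the literature. Your sketch of \Th{e78} is broadly in the spirit of the paper's actual proof of that theorem (it likewise splits on whether the $0$-lines can be pierced by at most two points, and invokes \Lm{second} and \Lm{Bruen} in the respective cases), but that is a different statement from the one at hand, and your structural claim that the $0$-lines form at most two pencils is asserted rather than proved --- the paper avoids needing it in that form by using the near-blocker construction of \Lm{near} instead.
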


Let $\cU\subseteq\cL$  be a set of lines. A set $C\subseteq\cP$ is called a
{\em near-blocker of\/} $\cU$ if it meets exactly all but one member of~$\cU$.

\begin{lemma}\label{l:near} Let $\cU$\/ be a set of lines in $PG(2,q)$.
 \begin{lst}
  \item[(a)] Suppose that $\cap_{\ell\in\cU}\ell=\emptyset$.
  Then there exists a near-blocker of size at most $|\cU|/2$.
  \item[(b)] Suppose that $q\ge 5$ is odd and\/ $\cU$ cannot be blocked by a $2$-element set.
  Then there exists a near-blocker of size at most $|\cU|/3+(q+1)/6$.
 \end{lst}
\end{lemma}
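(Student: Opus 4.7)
The plan is to prove the two parts in turn, with part (b) built on the method of part (a).

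For part (a), I would induct on $r=|\cU|$. The hypothesis $\cap_{\ell\in\cU}\ell=\emptyset$ forces $r\ge 3$, since any two lines in $PG(2,q)$ meet. For the base case $r=3$, the intersection $p=\ell_1\cap\ell_2$ of any two lines of $\cU$ does not lie on the third (else $p\in\cap\cU$), so $\{p\}$ is a near-blocker of size $1\le 3/2$. For the inductive step $r\ge 4$, pick any two lines $\ell_1,\ell_2\in\cU$ with intersection $p$ and set $\cU'=\cU\setminus\{\ell_1,\ell_2\}$. If $\cap\cU'=\emptyset$, induction gives a near-blocker $C'$ of $\cU'$ of size at most $(r-2)/2$, and $C=C'\cup\{p\}$ is a near-blocker of $\cU$ with the same missed line, of size at most $r/2$. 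Otherwise $\cap\cU'$ is a single point $q$, and the full hypothesis $\cap\cU=\emptyset$ forces $q\notin\ell_1$ or $q\notin\ell_2$. A short case analysis on whether $q$ misses exactly one or both of $\ell_1,\ell_2$ yields a near-blocker of size $1$ or $2$, which is at most $r/2$ for $r\ge 4$.

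For part (b), I would combine a greedy ``triple-covering'' phase with the part (a) construction on the remainder. Phase 1: iteratively find a point $p$ lying on at least three currently unblocked lines of $\cU$, add $p$ to $C_1$, and mark those lines blocked; let $t$ be the number of iterations. When this terminates, the remaining set $\cU''$ of $m$ lines has no three concurrent lines, so dually $\cU''$ is an $m$-arc in $PG(2,q)$; Segre's bound for odd $q$ gives $m\le q+1$. For $m\ge 3$ we have $\cap\cU''=\emptyset$, so part (a) applied to $\cU''$ produces a near-blocker $C_2$ of size at most $\lfloor m/2\rfloor$. Since the points of $C_1$ meet no line of $\cU''$, the missed line of $C_2$ is also missed by $C=C_1\cup C_2$, giving a near-blocker of $\cU$ with
\[
 |C|\le t+\lfloor m/2\rfloor\le\frac{r-m}{3}+\frac{m}{2}=\frac{r}{3}+\frac{m}{6}\le\frac{r}{3}+\frac{q+1}{6}.
\]
The small cases $m\le 2$ are handled by adding at most one extra point.

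The main obstacle is the Phase 1 edge case of ``over-blocking'': at some step, the only point on three or more unblocked lines is the center of a pencil consisting of all remaining unblocked lines, so selecting it would leave $m=0$ and produce a full blocker. In this situation I would terminate Phase 1 one step early and cover the residual pencil of $k$ lines through $p$ by $k-1$ direct points (one on each line, off $p$), leaving one line missed. The resulting bound $(r-k)/3+(k-1)$ only meets the target when $k$ is not too large, roughly $k\le (q+7)/4$. Here the hypothesis that $\cU$ cannot be blocked by a $2$-element set is essential: if $k$ were larger, the pencil center $p$ together with the common intersection point of the lines blocked in the earlier Phase 1 steps would assemble into a $2$-blocker of $\cU$, contradicting the hypothesis. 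Making this residual-pencil bookkeeping rigorous, together with the small-$m$ cases, is the chief technical burden of the proof.
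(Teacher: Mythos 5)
Your route differs from the paper's (which, in (a), inducts by deleting \emph{all} lines of $\cU$ through a chosen point of degree at least two, with a separate explicit construction when two points cover $\cU$; and in (b), inducts on the same statement by deleting the full pencil of a maximum-degree point, with a separate analysis when three points cover $\cU$), and your version has two genuine gaps. In part (a), the first branch of your inductive step is broken: a near-blocker must miss \emph{exactly} one line, and the inductively obtained $C'$ misses some $\ell_0\in\cU'$ that may well pass through $p=\ell_1\cap\ell_2$ (this happens whenever $\cU$ contains a third line through $p$ and the induction happens to miss that line). Then $C'\cup\{p\}$ meets every line of $\cU$, i.e., it is a full blocker rather than a near-blocker, and there is no cheap way to convert a small blocker into a small near-blocker. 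This is not cosmetic: the applications of \Lm{near} in the proofs of equality in \eqref{eq:7} and \eqref{eq:8} need a line $\ell_C$ genuinely missed by $C$ so that the Brouwer--Schrijver/Jamison bound can be applied to the affine plane $PG(2,q)\setminus\ell_C$. The paper sidesteps the issue by removing the whole pencil $\cU\cap\cL(\{p\})$ at once, which forces the missed line of the subproblem to avoid $p$.

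In part (b), the ``over-blocking'' case you flag is not actually resolved. If Phase 1 ends by absorbing a residual pencil of $k$ unblocked lines through a point $p$, your patch costs $k-1$ extra points and only fits the budget for $k\lesssim q/4$; your proposed contradiction for larger $k$ --- that $p$ together with ``the common intersection point of the lines blocked in the earlier Phase 1 steps'' gives a $2$-blocker --- presumes those earlier steps used a single point, which is false in general (Phase 1 may have selected many points, and the lines they block need not be concurrent). So for, say, several earlier greedy points followed by a large terminal pencil, no contradiction with the $2$-blocking hypothesis arises and the bound fails. The rest of your part (b) (greedy triple phase, Segre's bound $m\le q+1$ for the surviving dual arc when $q$ is odd, then part (a) on the remainder) is a genuinely different and appealing idea, but it inherits the part (a) gap and still needs a correct treatment of the $m=0$ case --- for instance by always choosing a \emph{maximum}-degree point and splitting on whether $\cU$ admits a $3$-point cover, as the paper does.
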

\begin{proof}
(a) Let us apply induction on the size of $|\cU|$. The cases $|\cU|=1,2,3$  are trivial.
If $\cU$ cannot be covered by two points then select any point $p\in\cP$ covered
at least twice by the lines of $\cU$ and use induction from $\cU\setminus\cL(\{p\})$.
Otherwise, some two points $x_1,x_2$ cover all lines. Assuming that
$\deg_\cU(x_1)\ge\deg_\cU(x_2)$,
select $x_1$ and one element from all but one of the lines of $\cU$ going
through $x_2$ and avoiding $x_1$.

(b) For $|\cU|\le q+2$ we have $\lfloor|\cU|/2\rfloor\le|\cU|/3+(q+1)/6$ and we can
apply case~(a). (If $|\cU|=q+2$ we make use of the fact that $q$ is odd.)
We may now suppose $|\cU|\ge q+3$, so $\max_p\deg_\cU(p)\ge 3$.
Consider first the case when $\cU$ cannot be covered by three vertices.
Chose a maximum degree vertex $p$ and apply the induction hypothesis
to $\cU\setminus\cL(\{p\})$.
Finally, if some set $\{x_1,x_2,x_3\}$ meets every member of $\cU$ we choose the
two highest degree vertices among them and one element from all but one of
the lines of $\cU$ going through the third, avoiding the other two.
In this way we obtain a near-cover of size at most $2+(|\cU|/3-1)$.
\end{proof}

The following lemma will be useful when $|\cL^e(A)|$, $t_1$, and $t_2$ are all small.

\begin{lemma}\label{l:diff}\ \\[-12pt]
 \begin{lst}
 \item[(a)] Let $A=(\ell\setminus T_1)\cup T_2$ where $\ell$ is a line,
  $T_1\subseteq\ell$, $T_2\cap\ell=\emptyset$, and\/ $t_i=|T_i|$.
  Then $|\cL^e(A)|\ge (t_1+t_2)q-t_2(2t_1+t_2-2)$.
 \item[(b)] Let $A=(B \setminus T_1)\cup T_2$ where $B$ is a Baer subplane,
  $T_1\subseteq B$, $T_2\cap B=\emptyset$, and\/ $t_i=|T_i|$.
  Then $|\cL^e(A)|\ge (t_1+t_2)q-t_2(2t_1+t_2-1)-t_1\sqrt{q}$.
 \end{lst}
\end{lemma}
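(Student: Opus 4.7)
The strategy in both parts is to compute $x_m := |A\cap m|\bmod 2$ for each line $m$, classify lines by the intersection of $m$ with the base set ($\ell$ in (a), $B$ in (b)) and with $T_1$, and then use a double-counting argument on $T_2$ to extract the bound. Write $j_m := |m\cap T_2|$ throughout.

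For (a), each line $m\ne\ell$ meets $\ell$ in a unique point $p_m$, so $x_m \equiv [p_m\notin T_1]+j_m \pmod{2}$. Thus $m\in\cL^e(A)$ when either $p_m\in T_1$ and $j_m$ is even, or $p_m\in\ell\setminus T_1$ and $j_m$ is odd. Let $a_1$ count the lines $m\ne\ell$ with $p_m\in T_1$ and $j_m$ odd, and $b_1$ those with $p_m\in\ell\setminus T_1$ and $j_m$ odd. Since there are exactly $t_1q$ lines through $T_1$ other than $\ell$, the first case contributes $t_1q-a_1$ and so $|\cL^e(A)|\ge (t_1q-a_1)+b_1$. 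The key lower bound $a_1+b_1\ge t_2(q+2-t_2)$ follows from $\sum_{m\ne\ell}j_m=t_2(q+1)$ together with the observation that lines $m$ with $j_m\ge 2$ contribute at most $\sum_{j\ge 2}j(j-1)N_j=2\binom{t_2}{2}$ to this sum; hence at least $t_2(q+2-t_2)$ lines have $j_m=1$, all of which are counted in $a_1+b_1$. Simultaneously, pair-counting over $T_1\times T_2$ gives $\sum_{m:\,p_m\in T_1}j_m = t_1t_2$, so $a_1\le t_1t_2$. Combining,
\[
 |\cL^e(A)|\ge t_1q-2a_1+(a_1+b_1)\ge t_1q+t_2(q+2-t_2)-2t_1t_2=(t_1+t_2)q-t_2(2t_1+t_2-2).
\]

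For (b) the same template runs with the Baer subplane $B$ replacing $\ell$, relying on two classical incidence facts: each $p\in B$ lies on $\sqrt{q}+1$ lines of $R_B$ and $q-\sqrt{q}$ lines outside $R_B$, while each $p\notin B$ lies on exactly one line of $R_B$ and $q$ lines outside $R_B$. Since $q$ is an odd square, $\sqrt{q}+1$ is even, so $x_m\equiv|m\cap T_1|+j_m\pmod{2}$ for $m\in R_B$, while for $m\notin R_B$ the point $p_m:=m\cap B$ is unique and $x_m\equiv[p_m\notin T_1]+j_m\pmod{2}$, exactly as in (a). Defining $a_1,b_1$ analogously but restricted to $m\notin R_B$, and bounding the contribution from $R_B$ trivially by $0$, we obtain $|\cL^e(A)|\ge (t_1(q-\sqrt{q})-a_1)+b_1$. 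The analogous $T_2$-counting now yields $\sum_{m\notin R_B}j_m=t_2q$ (one $R_B$-line per $q_0\in T_2$), hence $a_1+b_1\ge t_2(q-t_2+1)$, while still $a_1\le t_1t_2$, producing
\[
 |\cL^e(A)|\ge t_1(q-\sqrt{q})+t_2(q-t_2+1)-2t_1t_2=(t_1+t_2)q-t_2(2t_1+t_2-1)-t_1\sqrt{q}.
\]

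The main obstacle is really just the incidence bookkeeping in (b); in particular the fact that each external point of $B$ lies on exactly one line of $R_B$ is what allows the $T_2$-counting to almost parallel (a). Once this and the evenness of $\sqrt{q}+1$ are in hand, the rest is routine, and the $t_1\sqrt{q}$ slack in the conclusion of (b) is exactly the price paid for replacing the $t_1q$ lines through $T_1$ (other than $\ell$) by the $t_1(q-\sqrt{q})$ lines through $T_1$ outside $R_B$.
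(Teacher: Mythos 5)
Your proof is correct, and both parts land on exactly the paper's bounds; the approach is essentially the same as the paper's, which likewise double-counts even lines through $T_1$ (there, the $0$-point lines) and through $T_2$ (there, the $2$-point lines) using the same incidence facts about $\ell$, the Baer subplane, and $R_B$. Your global parity bookkeeping via $\sum_m j_m$ and $\sum_m j_m(j_m-1)$ is a mild repackaging of the paper's pointwise count at each point of $T_1\cup T_2$ and changes nothing essential.
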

\begin{proof}
(a) Consider the lines through a point $x\in T_2$. Exactly $q+1-t_1$ of them
meet $\ell\setminus T_1$. At most $t_2-1$ of these lines contain a
further point of $A$ (namely a point from~$T_2$).
Thus we have obtained at least $t_2(q+1-t_1-(t_2-1))$ 2-point lines.
Next consider the $q$ lines through a point $y\in T_1$ other than~$\ell$.
All but $t_2$ avoids $T_2$, too, thus giving at least $t_1(q-t_2)$ zero-point lines.
The total number of these lines gives the desired lower bound.

(b) Every point $x\in T_2$ is incident to at least
$(q-t_1)-(t_2-1)$ 2-point lines, and every point $y\in T_1$ is incident to at least
$q-\sqrt{q}-t_2$ zero-point lines.
\end{proof}

\begin{proof}[Proof of equality in~\eqref{eq:7}.]
Suppose, on the contrary, that we have a set of lines~$R$, $|R|=2q+\sqrt{q}$, such that
for $S=\sum_{\ell\in R}\ell$ we have $|S|<q+\sqrt{q}+2$. Since $|S|$ is even, we have
$|S|\le q+\sqrt {q}$. Since $R$ is odd we have $R=\cL^e(S)$. Thus $S$ meets every line
from $\cL\setminus R$. Let $\cU$ be the set of lines avoiding $S$, we have $\cU\subseteq R$.

First consider the case when there is a set $V$, $|V|\le 2$,
meeting all points of~$\cU$. (This includes the case $\cU=\emptyset$.)
Then $S\cup V$ meets all lines, so is a blocking set.

We claim that $S\cup V$ does not contain a line, so is a non-trivial blocking set.
Suppose, on the contrary, that  there is a line $\ell\subseteq S\cup V$.
Apply \Lm{diff}~(a) with $A=S=(\ell\setminus T_1)\cup T_2$ where
$T_1=\ell\cap V$, $|T_1|\le 2$ and $T_2=S\setminus\ell$, $|T_2|\le|S\cup V|-|\ell|\le\sqrt{q}+1$.
We obtain that
\[
 |\cL^e(S)|\ge t_1q+t_2(q+2-2t_1-t_2)\ge t_1q+t_2(q-\sqrt{q}-3).
\]
Since $|\cL^e(S)|=2q+\sqrt{q}$ we obtain that $|T_1|+|T_2|\le2$ for $q\ge49$.

We finish the proof of our claim by observing that for $|T_1|+|T_2|\le2$, $T_1\subseteq\ell$,
the number of even lines $|\cL^e((\ell\setminus T_1)\cup T_2)|$ cannot be $2q+\sqrt{q}$. % set->number
Indeed, in the case $T_1=\emptyset$ we have $|\cL^e(S)|\le t_2q+2<2q+\sqrt{q}$.
In the case $t_2=0$ we have $|\cL^e(S)|\le 1+t_1q<2q+\sqrt{q}$. % = -> \le
Finally, in the case $t_1=t_2=1$ we have $|\cL^e(S)|=2q-1<2q+\sqrt{q}$.

Consider $S\cup V$, which is a non-trivial blocking set of size at most $ q+\sqrt{q}+2$.
By the Bruen-Thas theorem (\Lm{Bruen}) there is a Baer subplain $B\subseteq S\cup V$.
Thus we know a lot about the structure of $S$, we can write $S=(B\setminus T_1)\cup T_2$
where $T_1= B\setminus S$ (it is a subset of $V$, so $t_1\le 2$) and
$T_2=S\setminus B\subseteq (S\cup V)\setminus B$ so $t_2\le 1$.

We finish the proof of the case $|V|\le 2$ by checking all possible values of $t_1$ and~$t_2$.
In case of $t_1=2$, $t_2=1$, \Lm{diff}~(b) applied to $A=S$ gives $|\cL^e(S)|\ge 3q-4-2\sqrt{q}$.
This exceeds $2q+\sqrt{q}$ for $q\ge25$. We obtain that $t_1+t_2\le2$.
Since $|S|$ is even and $|B|$ is odd their symmetric difference (i.e., $T_1\cup T_2$) is odd, % i->1
we get $t_1+t_2=1$. So $S$ should be one of the examples discussed in the beginning of this
section and we are done.

From now on suppose that there is no set $V$, $|V|\le2$,
meeting all points of~$\cU$. Apply \Lm{near}~(b) to $\cU$ to obtain a near-blocker $C$ of $\cU$
of size at most $|\cU|/3+(q+1)/6$ and a line  $\ell_C\in\cU$ missed by~$C$.
We proceed as in the proof of \Th{f(q+2)}.

The set $S\cup C$ meets all lines except $\ell_C$,
so it is a blocking set of the {\em affine} plane $PG(2,q)\setminus\ell_C$.
Then \Lm{second} yields $|S\cup C|\ge 2q-1$. We obtain
\[
 2q-1\le |S|+|C|\le (q+\sqrt{q})+|\cU|/3+(q+1)/6.
\]
Here $|\cU|\le|R|=2q+\sqrt{q}$ so the right hand side is at most $(11q+8\sqrt{q}+1)/6$. %
This cannot hold for $q\ge 81$. This final contradiction implies that $|S|\le q+\sqrt{q}$
is not possible for $q\ge 81$ and we are done.
\end{proof}

\begin{proof}[Proof of equality in~\eqref{eq:8}.]
This proof is similar to the previous proof, but simpler. Suppose, on the contrary, that we
have a set of lines $R$, $|R|=2q-\sqrt{q}$ such that for $S=\sum_{\ell\in R}\ell$ we
have $|S|<q+\sqrt{q}$. As $|S|$ is even, we have $|S|\le q+\sqrt {q}-2$.
Since $R$ is odd we have $R=\cL^e(S)$. Thus $S$ meets every line from $\cL\setminus R$.
Let $\cU$ be the set of lines avoiding $S$, so that $\cU\subseteq R$.

If there is a set $V$, $|V|\le 2$, meeting all points of $\cU$ (including
the case $\cU=\emptyset$) then $S\cup V$ meets all lines, it is a blocking set
of size at most $q+\sqrt{q}$.
By the Bruen theorem (\Lm{Bruen})  it must contain a line~$\ell$.
Apply \Lm{diff}~(a) with $A=S=(\ell\setminus T_1)\cup T_2$ where
$T_1=\ell\cap V$, $|T_1|\le 2$ and $T_2=S\setminus\ell$, $|T_2|\le|S\cup V|-|\ell|\le\sqrt{q}-1$.
We obtain that
\[
 |\cL^e(S)|\ge t_1q+t_2(q+2-2t_1-t_2)\ge t_1q+t_2(q-\sqrt{q}-1).
\]
Since $|\cL^e(S)|=2q-\sqrt{q}$ we obtain that $|T_1|+|T_2|\le2$ for $q\ge25$.

We finish the investigation of this case by observing that for $|T_1|+|T_2|\le2$,
$T_1\subseteq\ell$, the number of even lines $|\cL^e((\ell\setminus T_1) \cup T_2)|$ %
cannot be $2q-\sqrt{q}$. Since both $S$ and $\ell$ are even sets, their symmetric
difference (i.e., $T_1\cup T_2$) is even.
We have four cases to check according to the value of
$(t_1,t_2)\in\{(2,0),(1,1),(0,2),(0,0)\}$.
The sizes of $|\cL^e(S)|$ are $2q+1$, $2q-1$, again $2q+1$, and $1$, respectively.
None of these is equal to $2q-\sqrt{q}$.

From now on suppose that $\cU\ne\emptyset$ and there is no set $V$, $|V|\le 2$,
meeting all points of~$\cU$. Apply \Lm{near}~(b) to $\cU$ to obtain a near-blocker $C$
of $\cU$ of size at most $|\cU|/3+(q+1)/6$ and a line $\ell_C\in\cU$ missed by~$C$.
We proceed as in the proof of \Th{f(q+2)}.

The set $S\cup C$ meets all lines except~$\ell_C$, so it can be considered as a blocking
set of the affine plane $PG(2,q)\setminus\ell_C$.
Then \Lm{second} yields $|S\cup C|\ge 2q-1$. We obtain
\[
 2q-1\le |S|+|C|\le (q+\sqrt{q}-2)+|\cU|/3+(q+1)/6.
\]
Here $|\cU|\le|R|=2q-\sqrt{q}$ so the right-hand-side is at most $(11q+4\sqrt{q}-11)/6$.
This cannot hold for $q\ge 49$ implying that $|S|\le q+\sqrt{q}$
is not possible for $q\ge 49$ and we are done.
\end{proof}

With some more work we  can see that only the examples from the Baer subplane give
equalities in~\eqref{eq:7} and~\eqref{eq:8} (for $q> q_0$).

\bigskip

Many questions remain open. What is $f(q+2)$, and $f(q+3)$?
The least we should be able to do is to prove better
bounds on these. Also, any information about $f(r)$ for $r\le 2q^{3/2}$
would be great.

\section{Acknowledgements}

The authors are indebted to the referees for their helpful comments
 and suggestions.  %%% ???

\newpage

\newpage
\section*{Appendix A. Values of $f(r)$ for small $q$.}

\begin{table}[h]
\caption{$q=3$}\vskip-5pt
\[\begin{array}{rlrl}
r&f(r)&r&f(r)\\\hline
1&4 &4&4\\
2&6 &5&4\\
3&6 &6&2
\end{array}\]
\vskip-10pt
\end{table}

\begin{table}[h]
\caption{$q=5$}\vskip-5pt
\[\begin{array}{rlrlrl}
r&f(r)&r&f(r)&r&f(r)\\\hline
1&6 &6&6&11&4\\
2&10&7&8&12&4\\
3&12&8&8&13&6\\
4&12&9&6&14&6\\
5&10&10&2&15&4
\end{array}\]
\vskip-10pt
\end{table}

\begin{table}[h]
\caption{$q=7$}\vskip-5pt
\[\begin{array}{rlrlrlrl}
r&f(r)&r&f(r)&r&f(r)&r&f(r)\\\hline
1&8 &8 &8 &15&6&22&6\\
2&14&9 &12&16&8&23&6\\
3&18&10&10&17&8&24&4\\
4&20&11&10&18&6&25&8\\
5&20&12&12&19&10&26&6\\
6&18&13&8 &20&4&27&6\\
7&14&14&2&21&8&28&4
\end{array}\]
\vskip-10pt
\end{table}

\newcommand\q{\hbox{--}}

\begin{table}[h]
\caption{$q=9$}\vskip-5pt
\[\begin{array}{rlrlrlrlrl}
r&f(r)&r&f(r)&r&f(r)&r&f(r)\\\hline
1&10&10&10&19&8 &28&4 &37&6 \\
2&18&11&16&20&12&29&10&38&6 \\
3&24&12&12&21&10&30&6 &39&8 \\
4&28&13&14&22&10&31&8 &40&8 \\
5&30&14&14&23&12&32&4 &41&10\\
6&30&15&12&24&8 &33&10&42&6 \\
7&28&16&16&25&10&34&6 &43&8 \\
8&24&17&10&26&10&35&8 &44&8 \\
9&18&18&2 &27&12&36&4 &45&6
\end{array}\]
\vskip-10pt
\end{table}
% PG [OK]
%0    0    1  10   2  18   3  24  4   28   5  30   6  30   7  28
%8   24    9  18  10  10  11  16  12  12  13  14  14  14  15  12
%16  16   17  10  18   2  19   8  20  12  21  10  22  10  23  12
%24   8   25  10  26  10  27  12  28   4  29  10  30   6  31   8
%32   4   33  10  34   6  35   8  36   4  37   6  38   6  39   8
%40   8   41  10  42   6  43   8  44   8  45   6

% (dual) Hall [OK if <=14]
%0    0    1  10   2  18   3  24    4 28    5 30   6  30   7 28
%8   24    9  18  10  10  11  20?  12 20?  13 18? 14  14  15 16*
%16  20?  17  10  18   2  19   8   20 12   21 10  22  10  23 12
%24   8   25  10  26  10  27  12   28  4   29 10  30   6  31  8
%32   4   33  10  34   6  35   8   36  4   37  6  38   6  39  8
%40   8   41  10  42   6  43   8   44  8   45  6

% (not) dual Hall [OK if <=14]
%0    0    1 10    2 18    3 24    4 28    5 30    6 30    7 28
%8   24    9 18   10 10   11 91*  12 91*  13 91*  14 91*  15 16*
%16  16*  17 10   18  2   19  8   20 12   21 10   22 10   23 12
%24   8   25 10   26 10   27 12   28 4    29 10   30 6    31 8
%32   4   33 10   34  6   35  8   36 4    37 6    38 6    39 8
%40   8   41 10   42  6   43  8   44 8    45 6

% Hughes [OK if <=14]
%0   0     1 10    2 18    3 24    4 28    5 30    6 30    7 28
%8  24     9 18   10 10   11 20?  12 20*  13 18*  14 18*  15 16*
%16 16*   17 10   18  2   19  8   20 12   21 10   22 10   23 12
%24  8    25 10   26 10   27 12   28  4   29 10   30  6   31  8
%32  4    33 10   34  6   35  8   36  4    37 6   38  6   39  8
%40  8    41 10   42  6   43  8   44  8    45 6

\begin{table}
\caption{$q=11$}\vskip-5pt
\[\begin{array}{rlrlrlrlrlrl}
r&f(r)&r&f(r)&r&f(r)&r&f(r)&r&f(r)&r&f(r)\\\hline
 1&12 &12&12    &23&10    &34&10 &45&8 &56&8 \\
 2&22 &13&20    &24&16    &35&14 &46&6 &57&8 \\
 3&30 &14&14\q26&25&16    &36&4  &47&10&58&6 \\
 4&36 &15&14\q18&26&14    &37&12 &48&8 &59&10\\
 5&40 &16&16    &27&14    &38&10 &49&12&60&8 \\
 6&42 &17&16    &28&12    &39&10 &50&6 &61&8 \\
 7&42 &18&14\q18&29&16    &40&4  &51&10&62&10\\
 8&40 &19&14\q26&30&10    &41&12 &52&8 &63&10\\
 9&36 &20&16\q20&31&14\q18&42&6  &53&12&64&8 \\
10&30 &21&12    &32&12    &43&14 &54&6 &65&8 \\
11&22 &22& 2    &33&16    &44&4  &55&10&66&6
\end{array}\]
\vskip-10pt
\end{table}

\begin{figure}
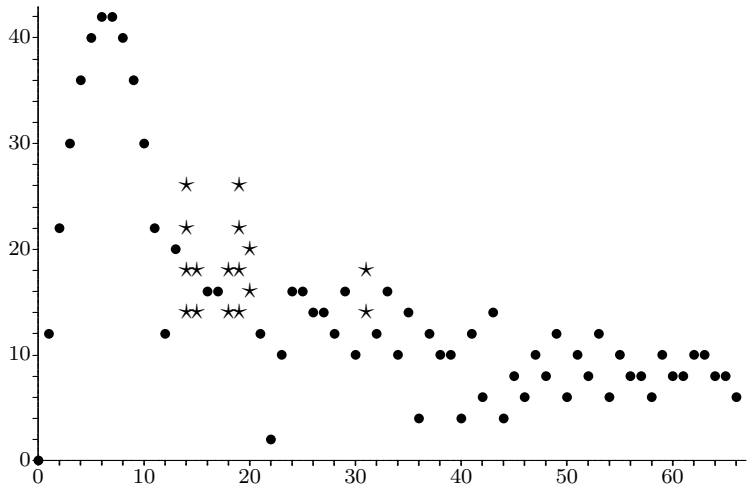

\[
 \unit=4pt
 \setbox\dotb\clap{$\thnline\unit=2pt\dl000{-1}$}
 \xoff=0pt\yoff=0pt\dline{-2}{0}{68}{0}{35}
 \setbox\dotb\llap{$\thnline\unit=2pt\dl00{-1}0$}
 \xoff=0pt\yoff=0pt\dline{0}{-2}{0}{44}{23}
 \thnline\dl{0}{0}{67}{0}\dl{0}{0}{0}{43}
 \ptld{0}{0}{0}\ptld{10}{0}{10}\ptld{20}{0}{20}\ptld{30}{0}{30}
 \ptld{40}{0}{40}\ptld{50}{0}{50}\ptld{60}{0}{60}
 \ptll{0}{0}{0}\ptll{0}{10}{10}\ptll{0}{20}{20}\ptll{0}{30}{30}
 \ptll{0}{40}{40}
 \pt{0}{0}\pt{1}{12}\pt{2}{22}\pt{3}{30}\pt{4}{36}\pt{5}{40}
 \pt{6}{42}\pt{7}{42}\pt{8}{40}\pt{9}{36}\pt{10}{30}\pt{11}{22}
 \pt{12}{12}\pt{13}{20}\px{14}{14}\px{14}{18}\px{14}{22}\px{14}{26}
 \px{15}{14}\px{15}{18}\pt{16}{16}\pt{17}{16}\px{18}{14}\px{18}{18}
 \px{19}{14}\px{19}{18}\px{19}{22}\px{19}{26}\px{20}{16}\px{20}{20}
 \pt{21}{12}\pt{22}{2}\pt{23}{10}\pt{24}{16}\pt{25}{16}\pt{26}{14}
 \pt{27}{14}\pt{28}{12}\pt{29}{16}\pt{30}{10}\px{31}{14}\px{31}{18}
 \pt{32}{12}\pt{33}{16}\pt{34}{10}\pt{35}{14}\pt{36}{4}\pt{37}{12}
 \pt{38}{10}\pt{39}{10}\pt{40}{4}\pt{41}{12}\pt{42}{6}\pt{43}{14}
 \pt{44}{4}\pt{45}{8}\pt{46}{6}\pt{47}{10}\pt{48}{8}\pt{49}{12}
 \pt{50}{6}\pt{51}{10}\pt{52}{8}\pt{53}{12}\pt{54}{6}\pt{55}{10}
 \pt{56}{8}\pt{57}{8}\pt{58}{6}\pt{59}{10}\pt{60}{8}\pt{61}{8}
 \pt{62}{10}\pt{63}{10}\pt{64}{8}\pt{65}{8}\pt{66}{6}
 \hskip66\unit
\]
\caption{Graph of $f(r)$ for $q=11$. Dots represent known values, and stars represent possible
values for the values of $r$ for which $f(r)$ is unknown.}
\end{figure}

\end{document}

ABSTRACT for ArXiv:

Let   q   be an odd prime power and let  f(r)  be the minimum size of the symmetric difference of  r  lines in the Desarguesian projective plane  PG(2,q). We prove some results about the function  f(r), in particular showing that there exists a constant  C>0  such that  f(r)=O(q)  for  Cq^{3/2}<r<q^2 - Cq^{3/2}.

\begin{proof}
Suppose for a contradiction that there are three lines containing
only single and double points. Without loss of generality these
lines are $\{X=0\}$, $\{Y=0\}$ and $\{Z=0\}$, and the
other lines are
\[
 a_iX+b_iY+c_iZ=0,
\]
$1\le i \le q-1$. No coefficient $a_i$, $b_i$, or $c_i$ is $0$ as,
for example, $a_i=0$ would imply this line would form a triple point
at $Y=Z=0$. So we may assume that the lines are written in the form
\[
 a_iX+b_iY=Z.
\]
Now these lines intersect $\{X=0\}$ in $b_i Y=Z$, so (putting $Y=1$,
say) we see that all the $b_i$ are distinct. Similarly the $a_i$ are
distinct. Also by considering the intersections with $\{Z=0\}$ we see that
the quotients $a_i/b_i$ are distinct.
The product of all the $q-1$ non-zero elements in $\F_q$ is $-1$ as every $x$
with $x\ne x^{-1}$ pairs up with its inverse and $x=x^{-1}$ iff
$x\in\{1,-1\}$. In particular, $\prod a_i=-1$, $\prod b_i=-1$, and
$\prod a_i/b_i=-1$. This gives a contradiction as
$-1=\prod a_i/b_i=\prod a_i/\prod b_i=(-1)/(-1)=1$.
\end{proof}

\begin{proof}
We may assume that ${\mathbf0}\in A$; set $B=A\setminus\{{\mathbf0}\}$,
so that every line $aX+bY=1$, say, avoiding $\mathbf0$
contains at least one point of $B$. Now, define
\[
 g(X,Y)=\prod_{{\mathbf b}\in B}(1-b_1X-b_2Y),
\]
where ${\mathbf b}=(b_1,b_2)$. Then $g(x,y)=0$ for all
$(x,y)\in\F_q^2\setminus\{(0,0)\}$ and $g(0,0)=1$.
If we let $h(X,Y)=(X^{q-1}-1)(Y^{q-1}-1)$ then $h(x,y)=g(x,y)$
for all $x,y\in\F_q$. Applying the Combinatorial
Nullstellensatz~\cite[Theorem~1.2]{CN}
to the polynomial $g-h$ we deduce that either $g-h$ has no $X^{q-1}Y^{q-1}$
term or $g-h$ has homogeneous degree not equal to $2q-2$. In either case
$g$ has homogeneous degree at least $2q-2$, so $|B|\ge 2q-2$.
\end{proof}